\definecolor{darkred}{RGB}{180,0,0}
\definecolor{darkblue}{RGB}{0,0,180}
\title[Two conjectures on coarse conjugacy]
{Two conjectures
on coarse conjugacy
\\
by Geller and Misiurewicz}
\author{Damian Sawicki}
\address{KU Leuven, Department of Mathematics,
Celestijnenlaan 200b – box 2400, 3001 Leuven, Belgium}
\urladdr{https://sites.google.com/view/damiansawicki}
\thanks{The author was partially supported by the FWO research project G090420N of the Research Foundation Flanders.}
\date{\nth{28} June 2022}
\subjclass[2020]{37B02, 51F30}
\keywords{Coarse conjugacy, coarse equivalence, close maps, coarse inverse, coarse dynamics, coarse entropy}
\newcommand{\st}[1][ ]{\ #1|\ }
\newcommand{\ceil}[1]{{\left\lceil #1 \right\rceil}}
\newcommand{\floor}[1]{{\left\lfloor #1 \right\rfloor}}
\DeclareMathOperator{\id}{id}
\DeclareMathOperator{\N}{N}
\newcommand\set[1]{\left\{#1\right\}}
\newcommand\range[2]{#1,\ldots,#2}
\newcommand*{\defeq}{\mathrel{\vcenter{\baselineskip0.5ex \lineskiplimit0pt
                     \hbox{\scriptsize.}\hbox{\scriptsize.}}}%
                     =}
\newcommand*{\eqdef}{=\mathrel{\vcenter{\baselineskip0.5ex \lineskiplimit0pt
                     \hbox{\scriptsize.}\hbox{\scriptsize.}}}}
\def\NN{{\mathbb N}}
\def\ZZ{{\mathbb Z}}
\def\RR{{\mathbb R}}
\newtheorem{thm}{Theorem}[section]
\crefname{thm}{Theorem}{Theorems}
\crefname{conj}{Conjecture}{Conjectures}
\newtheorem{lem}[thm]{Lemma}
\newtheorem{rem}[thm]{Remark}
\newtheorem{prop}[thm]{Proposition}
\newtheorem{cor}[thm]{Corollary}
\newtheorem{conjecture}{Conjecture}
\crefname{conjecture}{Conjecture}{Conjectures}
\newtheorem{alphaTheorem}[conjecture]{Theorem}
\crefname{alphaTheorem}{Theorem}{Theorems}
\theoremstyle{definition}
\newtheorem{defi}[thm]{Definition}
\newtheorem{longEx}[thm]{Example}
\crefname{longEx}{Example}{Examples}
\begin{document}

\begin{abstract}
In their study of coarse entropy, W.~Geller and M.~Misiurewicz introduced the notion of coarse conjugacy: a version of conjugacy appropriate for dynamics on metric spaces observed from afar. They made two conjectures on coarse conjugacy generalising their results. We disprove both of these conjectures. We investigate the impact of extra assumptions on the validity of the conjectures: We show that the result of Geller and Misiurewicz towards one of the conjectures can be considered optimal, and we prove the other conjecture under an assumption complementary to that from the referenced work. 
\end{abstract}

\maketitle

\section{Introduction}

The following notion of conjugacy was recently introduced by W.~Geller and M.~Misiurewicz as a fundamental concept in their study \cite{GM} of coarse entropy, which is a version of entropy reminiscent of topological entropy and consistent with the coarse-geometric approach to metric spaces.

\begin{defi}\label{conjugacy}
Self-maps $f\colon X\to X$ and $g\colon Y\to Y$ of metric spaces $X$ and $Y$ are \emph{coarsely conjugate} if there
exists a coarse equivalence $\varphi\colon X\to Y$ with a coarse inverse $\psi \colon Y\to X$ such that
$\varphi\circ f$ is close to $g\circ\varphi$ and $\psi\circ g$ is close to
$f\circ\psi$.
\end{defi}

Let us recall the definitions. Assume that $(X,d_X)$ and $(Y,d_Y)$ are metric spaces. Two maps $a,b\colon X\to Y$ are \emph{close} if their supremum distance is finite, that is, $\sup_{x\in X} d_Y(a(x), b(x))<\infty$. A~map $\varphi\colon X\to Y$ is \emph{controlled} (or \emph{bornologous}) if there exists a non-decreasing function $\rho\colon [0,\infty) \to [0,\infty)$ controlling how much $\varphi$ increases distances: Namely, every $x,x'\in X$ satisfy $d_Y(\varphi(x), \varphi(x')) \leq \rho(d_X(x,x'))$. A~controlled map $\varphi \colon X\to Y$ is called a \emph{coarse equivalence} if it admits a \emph{coarse inverse}, that is, a controlled map $\psi\colon Y\to X$ such that $\varphi \circ \psi$ and $\psi\circ \varphi$ are close to the respective identity maps. 
For a comprehensive introduction to coarse geometry, see \cites{Roe, NY}.

Inspired by Gromov's classical notion of coarse equivalence, coarse conjugacy is designed to capture the fact that dynamics on two unbounded metric spaces is the same at the large scale.

\subsubsection*{Context} 
Coarse geometry or large scale geometry is a theory that is dual to topology of metric spaces in the sense that it focuses on large rather than small distances. A prominent class of discrete metric spaces whose coarse geometry is rich is the class of finitely generated groups, including fundamental groups of closed manifolds. For example, coarse geometry provided the proof \cite{Yu} of the Novikov conjecture for a very large class of manifolds.

Compact dynamical systems yield unbounded metric spaces via the warped cone construction \cite{Roe-cones}, and under certain assumptions \cite{FNvL} warped cones associated to two dynamical systems are coarsely equivalent if and only if the initial dynamical systems are conjugate.

Once a class of mathematical objects has been well understood ``statically'', the researchers aspire to unravel their dynamics. Topological dynamics is a~classical example, and more modern ones include C*-dy\-nam\-ics or the study of automorphism groups of various structures.

The considerations of coarse-geometric notions of dynamics
are therefore timely. Indeed, while coarse entropy of \cite{GM} is inspired by topological entropy, another notion referred to as coarse entropy and inspired by algebraic entropy of group endomorphisms has also been introduced recently \cite{Zava}.
The introduction of coarse conjugacy is a fundamental development
within this emerging direction,
and our aim in the present note is to analyse two conjectures on coarse conjugacy by Geller and Misiurewicz.

\subsubsection*{Results}

The following theorem from \cite{GM} shows that coarse conjugacy is preserved when replacing transformations with their $n$th iterations.

\begin{thm}\cite{GM}*{Proposition~2.8 and Corollary~2.13}\label{thmPowers}
If $f$ and $g$ are coarsely conjugate and $g$ is controlled, then $f$ is controlled as well and for any natural $n$ the maps $f^n$ and $g^n$ are coarsely conjugate via the same
coarse equivalences as $f$ and~$g$.
\end{thm}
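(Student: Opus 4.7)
The plan has two steps matching the two claims of the theorem: first I would prove that $f$ is controlled, and only then would I run an induction on $n$ to deduce that $\varphi\circ f^n$ is close to $g^n\circ\varphi$ and $\psi\circ g^n$ is close to $f^n\circ\psi$.

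For the first step, the idea is to show that $f$ is close to the map $\psi\circ g\circ\varphi$, which is a composition of controlled maps and hence controlled. A map close to a controlled map is itself controlled (one inflates the control function by twice the closeness constant, using the triangle inequality), so this will finish Step~1. To see the closeness, I apply $\psi$ on the left to the hypothesis that $\varphi\circ f$ is close to $g\circ\varphi$: because $\psi$ is controlled, it sends pairs at uniformly bounded distance to pairs at uniformly bounded distance, so $\psi\circ\varphi\circ f$ is close to $\psi\circ g\circ\varphi$. Evaluating the closeness $\psi\circ\varphi\sim\id_X$ at the point $f(x)$ gives that $\psi\circ\varphi\circ f$ is close to $f$, and transitivity of closeness then yields $f$ close to $\psi\circ g\circ\varphi$. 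Crucially, this derivation uses only the controlledness of $\varphi$, $\psi$, and $g$, together with the closeness hypotheses.

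For the second step, I induct on $n$, the base case $n=1$ being the hypothesis. Assuming $\varphi\circ f^{n-1}$ is close to $g^{n-1}\circ\varphi$, I decompose $\varphi\circ f^n = (\varphi\circ f)\circ f^{n-1}$; applying the $n=1$ closeness pointwise at $f^{n-1}(x)$ shows this is close to $(g\circ\varphi)\circ f^{n-1} = g\circ(\varphi\circ f^{n-1})$, which in turn is close to $g\circ(g^{n-1}\circ\varphi) = g^n\circ\varphi$ because $g$ is controlled and can be applied on the left to the inductive hypothesis. Transitivity completes the inductive step. The argument that $\psi\circ g^n$ is close to $f^n\circ\psi$ is entirely symmetric, with the roles of $\varphi,f,g$ and $\psi,g,f$ swapped.

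The main subtlety lies in the order of the two steps: in the symmetric induction on the $\psi$-side, one needs to apply $f$ on the left to preserve closeness, which requires $f$ to be controlled. This rules out proving the iteration statement before establishing the controlledness of~$f$. Fortunately, as observed in Step~1, the closeness hypotheses for $n=1$ combined with the controlledness of $\varphi$, $\psi$, and $g$ already suffice to derive that $f$ is controlled, so the two steps can be carried out in the right order without circularity.
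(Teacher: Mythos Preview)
Your argument is correct. Note, however, that the paper does not itself prove this theorem: it is quoted from \cite{GM} (Proposition~2.8 and Corollary~2.13), so there is no in-paper proof to compare against. Your two-step structure---first deducing that $f$ is controlled from $f\sim\psi\circ g\circ\varphi$, then inducting on~$n$ using that both $f$ and $g$ are controlled so that post-composition by either preserves closeness---is the standard route and matches the split in the citation: Proposition~2.8 of \cite{GM}, restated in the paper as \cref{thmEmbeddings}, is precisely the claim that $g$ controlled forces $f$ controlled (and in fact yields the $\psi$-side closeness from the $\varphi$-side alone), while Corollary~2.13 handles the passage to iterates. Your observation that Step~1 must precede Step~2 because the $\psi$-side induction requires post-composing by~$f$ is exactly the point.
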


Geller and Misiurewicz made the following conjecture generalising the above result.

\begin{conjecture}\cite{GM}*{Conjecture~2.11}\label{conjecture2}
If $f$ and $g$ are coarsely conjugate, then so are $f^n$ and $g^n$ for all natural $n$.
\end{conjecture}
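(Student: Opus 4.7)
The natural proof of \cref{thmPowers} presumably uses the controlled hypothesis critically: if $\varphi\circ f$ and $g\circ \varphi$ are within $C$ in supremum distance and $g$ is controlled by $\rho$, then inductively $\varphi\circ f^n$ and $g^n\circ \varphi$ are within $C + \rho(C) + \cdots + \rho^{n-1}(C) < \infty$. Without control on $g$, this argument collapses, so I would try to \emph{disprove} the conjecture rather than prove it.

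As a first simplification I would take $X = Y$ with $\varphi = \psi = \id$, so that coarse conjugacy of $f, g\colon X\to X$ reduces to closeness of $f$ and $g$. Then I would search for close maps $f, g$ whose iterates $f^2, g^2$ fail to be coarsely conjugate by any equivalence. A natural candidate is $X = \NN$ with $f(n) = 2^n$ and $g(n) = 2^n + 1$: here $|f(n) - g(n)| = 1$ for all $n$, so $f$ and $g$ are close, whereas $f^2(n) = 2^{2^n}$ and $g^2(n) = 2\cdot 2^{2^n}+1$ clearly fail to be close.

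The main step is to rule out coarse conjugacy of $f^2$ and $g^2$ by \emph{any} pair $(\varphi, \psi)$. The key observation is that every controlled map $\NN \to \NN$ is $\rho(1)$-Lipschitz, hence at most linear: $\varphi(m) \leq \varphi(0) + \rho(1)\cdot m$. This yields a merely single-exponential upper bound on $\varphi(f^2(n)) = \varphi(2^{2^n})$, which must match the double-exponential quantity $g^2(\varphi(n)) = 2\cdot 2^{2^{\varphi(n)}}+1$ up to an additive constant. Together with the symmetric analysis for $\psi$, this growth-rate comparison should force $\varphi$ to be within bounded distance of the identity. But then the conjugacy condition degenerates to closeness of $f^2$ and $g^2$, which has already been ruled out.

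The main obstacle is the last rigidity step: extracting $|\varphi(n)-n| = O(1)$ from the comparison $|\varphi(f^2(n)) - g^2(\varphi(n))| \leq C$. I expect to take iterated logarithms to show that the admissible range of $\varphi(n)$ shrinks to a bounded neighbourhood of $n$, after which direct computation with $f^2 - g^2$ closes the contradiction.
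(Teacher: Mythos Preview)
Your proposal is sound and yields a valid counterexample. The key steps all go through: linearity of controlled self-maps of $\NN$ gives $2^{2^{\varphi(n)}} \lesssim 2^{2^n}$, whence $\varphi(n) \le n + O(1)$; the symmetric bound on $\psi$ together with $\psi\circ\varphi \sim \id$ forces $\varphi(n) \ge n - O(1)$; and then the case split $\varphi(n) = n$ versus $\varphi(n) \le n-1$ shows $|\varphi(f^2(n)) - g^2(\varphi(n))| \to \infty$. One caution on your final sentence: once $\varphi$ is close to the identity you cannot literally reduce to closeness of $f^2$ and $g^2$, because $g^2$ is not controlled and so $g^2\circ\varphi$ need not be close to $g^2$. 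What you actually need---and what you correctly anticipate under ``direct computation''---is the explicit estimate of $g^2(\varphi(n))$ for the finitely many possible integer values of $\varphi(n)$ in $[n-D,n]$.

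The paper takes a different route. Its first counterexample (\cref{ex:squares}) lives on $[1,\infty)$ with $g(x)=x^2$ and $f_k$ a perturbation; non-conjugacy of the higher powers is detected not by growth rates but by \emph{coarse surjectivity}: $f_k^{n}(X)$ lands in the sparse set $\{m^2\}$ for $n>k$, while $g^n$ is onto. Its second counterexample (\cref{ex:invertible}) uses a growth-rate comparison closer in spirit to yours, formalised as \cref{qwerty}: an $f$-orbit growing like $F^n$ cannot be intertwined with a $g$ whose per-step growth is bounded by $G<F$, again exploiting that coarse equivalences of halflines are quasi-isometries. What the paper buys with its more elaborate constructions is, first, a full family indexed by $k$ where $f^n,g^n$ are conjugate precisely for $n\le k$, and second, \emph{bijective} dynamics in \cref{ex:invertible}, showing that invertibility does not rescue the conjecture. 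Your example is cleaner but hits only $k=1$ and is non-surjective; your rigidity argument (pin $\varphi$ to a bounded neighbourhood of $\id$) is not used in the paper and is an attractive alternative to \cref{qwerty}.
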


The analogues of \cref{conjecture2} hold not only in the category of sets or topological spaces, but also in more complicated settings like measurable dynamics. However, our \cref{thm:squares,invertible} show that
\cref{conjecture2} fails.
The corresponding counterexamples are \cref{ex:squares,ex:invertible}.

In fact, in \cref{ex:squares,ex:invertible} we arrange $X$ to equal $Y$ and the conjugating maps $\varphi$ and $\psi$ to be the identity map. This shows that imposing any extra conditions on the conjugating maps, for example bijectivity like in \cite{Zava}, does not make \cref{conjecture2} true. While this is not yet true in \cref{ex:squares}, we construct the transformations $f$ and $g$ in \cref{ex:invertible} to be bijections. Summarising, we prove that \cref{conjecture2} does not become true if one assumes the conjugating maps to be the identity and the dynamics to be invertible, which shows the optimality of \cref{thmPowers}.

One can ask how long it takes in terms of $n$ in \cref{conjecture2} to find the powers $f^n$ and $g^n$ that are not coarsely conjugate. It turns out that the answer can be any integer $\geq 2$. For every $k\in \NN_{\geq 1}$, each of \cref{ex:squares,ex:invertible} constructs $f\colon X\to X$ and $g\colon Y\to Y$ such that $f^n$ and~$g^n$ are coarsely conjugate precisely for $n\in \set{\range{1}{k}}$.

\bigskip

The following result gives a sufficient condition for coarse conjugacy.

\begin{thm}\cite{GM}*{Proposition~2.8}\label{thmEmbeddings}
Consider maps $f\colon X\to X$ and $g\colon Y\to Y$ for which there exists a coarse equivalence $\varphi \colon X\to Y$ such that $\varphi\circ f$ is close to $g\circ\varphi$ and $g$ is controlled. Then $f$ is also controlled and for any coarse inverse $\psi$ of $\varphi$ the maps $f$ and $g$ are coarsely conjugate via $\varphi$ and $\psi$.
\end{thm}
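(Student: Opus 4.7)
The plan is to derive both conclusions by elementary manipulation of the closeness relation, using the following three standard observations that I would first isolate: (a) if $a,b\colon X\to Y$ are close, then for any map $c\colon Z\to X$ the compositions $a\circ c$ and $b\circ c$ are close; (b) if in addition $c\colon Y\to Z$ is controlled, then $c\circ a$ and $c\circ b$ are close; (c) any map close to a controlled map is itself controlled (add twice the sup-distance to the control function). Throughout, let $\rho_g,\rho_\varphi,\rho_\psi$ be control functions for $g$, $\varphi$, $\psi$, respectively.

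To show that $f$ is controlled I would start with the closeness of $\psi\circ\varphi$ to $\id_X$; composing on the right by $f$ (observation (a)) yields that $\psi\circ\varphi\circ f$ is close to $f$. On the other hand, applying $\psi$ on the left of the hypothesis $\varphi\circ f \sim g\circ\varphi$, which is legitimate by observation (b) since $\psi$ is controlled, shows that $\psi\circ\varphi\circ f$ is close to $\psi\circ g\circ\varphi$. Transitivity of closeness gives that $f$ is close to $\psi\circ g\circ\varphi$, a composition of three controlled maps and hence controlled; observation (c) then promotes $f$ to a controlled map.

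To establish the missing closeness $\psi\circ g \sim f\circ\psi$ from \cref{conjugacy}, I would perform a symmetric chase. Pre-composing the hypothesis with $\psi$ gives $\varphi\circ f\circ\psi \sim g\circ\varphi\circ\psi$; combining this with the closeness of $\varphi\circ\psi$ to $\id_Y$ (composed on the left by the controlled map $g$) yields $\varphi\circ f\circ\psi \sim g$. Applying the controlled map $\psi$ on the left, I obtain $\psi\circ\varphi\circ f\circ\psi \sim \psi\circ g$. Finally, the closeness of $\psi\circ\varphi$ to $\id_X$, composed on the right with $f\circ\psi$, identifies the left-hand side with $f\circ\psi$ up to closeness, and transitivity completes the argument.

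I expect no genuine obstacle: the proof is a diagram chase, and the only point demanding a little care is to verify that at each use of (b) the map placed on the left is actually controlled; this is why controlling $f$ has to be carried out \emph{before} the second part, even though in the final statement of closeness $f$ appears only composed with $\psi$ (where it would in fact suffice as a bare map by observation (a)). All resulting bounds are explicit finite expressions in $\rho_g$, $\rho_\varphi$, $\rho_\psi$, and the three finite sup-distances $\sup d_Y(\varphi f,g\varphi)$, $\sup d_X(\psi\varphi,\id_X)$, $\sup d_Y(\varphi\psi,\id_Y)$.
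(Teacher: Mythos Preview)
Your argument is correct: the diagram chase you outline via observations (a)--(c) is exactly the standard proof, and every step is valid. Note, however, that the present paper does not supply its own proof of this statement---it is quoted verbatim as \cite{GM}*{Proposition~2.8}---so there is no in-paper proof to compare against. That said, the reasoning in the paper's \cref{surjective-conjecture} (post-composing by a controlled map and pre-composing freely to propagate closeness through $\psi\circ\varphi\sim\id_X$ and $\varphi\circ\psi=\id_Y$) is the same mechanism you use, so your proposal is entirely in the spirit of the paper. One small remark: your closing paragraph slightly overstates the dependency between the two parts---as you yourself observe parenthetically, the second chase never places $f$ on the left of a closeness relation, so it goes through independently of whether $f$ has already been shown controlled.
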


Geller and Misiurewicz conjectured a generalisation as follows.

\begin{conjecture}\cite{GM}*{Conjecture~2.4}\label{conjecture}
If there exist coarse equivalences $\varphi\colon X\to Y$ and $\psi\colon Y\to X$ such that
$\varphi\circ f$ is close to $g\circ\varphi$ and $\psi\circ g$ is close to
$f\circ\psi$, then $f$ and $g$ are coarsely conjugate.
\end{conjecture}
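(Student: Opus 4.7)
The plan is to disprove \cref{conjecture} by exhibiting an explicit counterexample.

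First I would pinpoint the structural gap between the hypothesis and the conclusion. The only difference between \cref{conjecture} and coarse conjugacy itself (\cref{conjugacy}) is that the latter requires $(\varphi,\psi)$ to be a coarse-inverse pair, while the former only requires each of $\varphi,\psi$ to be a coarse equivalence. A short argument shows that if $\varphi$ is a coarse equivalence and $\psi$ is a controlled map with $\psi\circ\varphi$ close to $\id_X$, then $\psi$ is automatically close to any coarse inverse of $\varphi$ and hence itself a coarse inverse. So if $\psi\circ\varphi$ were close to $\id_X$ in the setting of \cref{conjecture}, the hypothesis would already deliver coarse conjugacy via $(\varphi,\psi)$. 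Consequently, a counterexample must have $\psi\circ\varphi$ genuinely far from the identity. Moreover, combining the two closeness conditions (post-composing the first with the controlled map $\psi$ and using the second) yields that $\psi\circ\varphi$ commutes with $f$ up to closeness, so $\psi\circ\varphi$ plays the role of a nontrivial coarse symmetry of $f$.

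A further reduction: if $g$ is controlled, the conclusion of \cref{conjecture} follows from just the first of its closeness conditions by \cref{thmEmbeddings}; the symmetric statement gives the same from $f$ controlled. Hence in a counterexample neither $f$ nor $g$ may be controlled.

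Guided by these constraints, I would take $X=Y=\NN$ and choose $\varphi,\psi$ to be rescaling coarse equivalences whose composition is manifestly far from the identity — for instance $\varphi(n)=2n$ and $\psi(n)=3n$, so that $\psi\circ\varphi(n)=6n$. The two closeness conditions then become recursive constraints of the shape $g(2n)\approx 2f(n)$ and $f(3n)\approx 3g(n)$, which leave considerable freedom in defining $f$ on integers coprime to $3$ and $g$ on odd integers. I would use this freedom to manufacture non-controlled $f$ and $g$ that differ in some coarse-conjugacy invariant while still satisfying the recursive closeness constraints.

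The main obstacle will be the non-conjugacy verification: showing that no pair of coarse inverses $(\varphi',\psi')$ witnesses $f$ and $g$ as coarsely conjugate. This requires isolating an invariant of coarse conjugacy which distinguishes $f$ from $g$ despite their tight link under $(\varphi,\psi)$. The natural candidate is the coarse entropy of \cite{GM} that motivates the whole investigation, but a more elementary orbit-growth quantity might equally well serve; the construction must be tuned so that the chosen invariant takes different values for $f$ and $g$.
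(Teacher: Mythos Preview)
Your preliminary analysis is correct and useful: a counterexample must have both $f$ and $g$ non-controlled (by \cref{thmEmbeddings}), and $\psi\circ\varphi$ must be far from the identity while coarsely commuting with $f$. However, the proposal stops short of an actual construction: you have not specified $f$ and $g$, nor identified a concrete coarse-conjugacy invariant that would distinguish them. The suggestion to use coarse entropy is problematic --- coarse entropy as defined in \cite{GM} is tailored to controlled maps, so for non-controlled $f,g$ it may be ill-behaved or trivially infinite; and in any case the very relations $\varphi\circ f\sim g\circ\varphi$ and $\psi\circ g\sim f\circ\psi$ tend to force any global growth-type quantity to agree for $f$ and $g$. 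Without a working invariant, there is no proof, only a programme.

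The paper takes a rather different route. Instead of $\NN$, it uses a ``bundle of thick halflines'' in $\RR^3$: for each $n\in\NN_{\geq 1}$, a copy of $[0,\infty)\times\{1,\ldots,2n+1\}$ in $X$ and of $[0,\infty)\times\{1,\ldots,2n\}$ in $Y$, placed at first coordinate $n^2$ so that distinct bundles are far apart. The dynamics is $f(n^2,r,k)=(n^2,kr,k)$, so on the $n$th halfline the maximal linear stretching factor is $2n+1$ in $X$ and $2n$ in $Y$. The maps $\varphi$ (shift $n\mapsto n+1$) and $\psi$ (inclusion) satisfy both closeness relations as \emph{equalities}. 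Non-conjugacy is then not proved via a scalar invariant but via a structural matching argument: first one shows (\cref{lemma-split}) that any coarse equivalence $\Phi\colon X\to Y$ must, cofinitely, carry each thick halfline to a single thick halfline, inducing a bijection $F$ on indices; then, comparing exponential orbit growth on each halfline (\cref{qwerty}), one deduces that a conjugating pair $(\Phi,\Psi)$ would force $F(n^2)\geq(n+1)^2$ and $F^{-1}(m^2)\geq m^2$ simultaneously, which is impossible. It is worth noting that the paper explicitly leaves open whether a counterexample exists on spaces of asymptotic dimension zero such as $\NN$ --- so your proposed setting may be intrinsically harder, not merely unfinished.
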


In other words, \cref{conjecture} predicts that if the dynamics on $X$ can be realised inside $Y$, and the dynamics on $Y$ can be realised inside $X$, and these can be realised through maps $\varphi$ and $\psi$ that are coarse equivalences,
then $\varphi$ and $\psi$ can be chosen in a compatible way, that is, being coarse inverses of each other.

However, in \cref{ex:disjointTheorem} and the corresponding \cref{disjointTheorem} we show that \cref{conjecture}
does not hold.
In fact, in our counterexample the transformations $f$ and~$g$ are bijective, which proves that---like \cref{conjecture2}---\cref{conjecture} is not true after
restricting to invertible dynamics.

Nonetheless, as opposed to \cref{conjecture2}, \cref{conjecture} can be turned into a~theorem by imposing extra set-theoretic restrictions on the conjugating maps~$\varphi$ or~$\psi$. Namely, the following \cref{thmSurjective} (see \cref{surjective-conjecture} in text) shows that \cref{conjecture} holds if one assumes $\varphi$~to be surjective. Furthermore, similarly as in \cref{thmEmbeddings}, it suffices to assume only that $\varphi\circ f$ is close to $g\circ\varphi$, and then the existence of $\psi$ with the appropriate properties follows.

\begin{alphaTheorem}\label{thmSurjective}
Assume that there exists a \emph{surjective} coarse equivalence $\varphi\colon X\to Y$ such that
$\varphi\circ f$ is close to $g\circ\varphi$. Then, for any $\psi\colon Y\to X$ such that $\varphi \circ \psi = \id_Y$, the maps $f$ and $g$ are coarsely conjugate via $\varphi$ and $\psi$.
\end{alphaTheorem}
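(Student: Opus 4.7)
The plan is to verify for the pair $(\varphi,\psi)$ each of the conditions in \cref{conjugacy}. The closeness of $\varphi\circ f$ to $g\circ\varphi$ is part of the hypothesis, so it remains to establish (a) that $\psi$ is a coarse inverse of $\varphi$ and (b) that $\psi\circ g$ is close to $f\circ\psi$. The technical tool throughout will be the standard observation that for any coarse inverse $\psi_0$ of $\varphi$, applying $\psi_0$ and using the closeness of $\psi_0\circ\varphi$ to $\id_X$ gives a uniform bound on $d_X(x,x')$ in terms of $d_Y(\varphi(x),\varphi(x'))$; equivalently, if two maps into $X$ become uniformly close after postcomposing with $\varphi$, then they were already close.

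For (a), I would fix an arbitrary coarse inverse $\psi_0$ of $\varphi$. The identity $\varphi\circ\psi=\id_Y$ combined with the closeness of $\varphi\circ\psi_0$ to $\id_Y$ implies that $\varphi\circ\psi$ is close to $\varphi\circ\psi_0$, so by the observation above $\psi$ is close to $\psi_0$. From this I would read off that $\psi$ is controlled (by adding the closeness constant to a control function of $\psi_0$) and that $\psi\circ\varphi$ is close to $\psi_0\circ\varphi$, hence to $\id_X$. Together with the given $\varphi\circ\psi=\id_Y$, this exhibits $\psi$ as a coarse inverse of~$\varphi$.

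For (b), I would substitute $x=\psi(y)$ into the closeness of $\varphi\circ f$ to $g\circ\varphi$, obtaining that $\varphi(f(\psi(y)))$ is uniformly close to $g(\varphi(\psi(y)))=g(y)=\varphi(\psi(g(y)))$, where the last equality again uses $\varphi\circ\psi=\id_Y$. Invoking the observation once more with the pairs $(f(\psi(y)),\psi(g(y)))$ would then yield the required closeness of $f\circ\psi$ to $\psi\circ g$.

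I do not foresee a substantial obstacle; the conceptual point is that surjectivity of $\varphi$ enables the substitution $x=\psi(y)$ and thereby bypasses any need to assume $g$ controlled, in contrast to \cref{thmEmbeddings}. The argument works without any control assumption on $f$ or~$g$.
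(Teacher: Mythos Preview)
Your proposal is correct and follows essentially the same route as the paper: the paper also treats part~(a) as a standard exercise and for part~(b) sandwiches the relation $\varphi\circ f\sim g\circ\varphi$ by $\psi$ to obtain $\psi\circ\varphi\circ f\circ\psi\sim\psi\circ g$ and $\psi\circ\varphi\circ f\circ\psi\sim f\circ\psi$, which is the same computation as your ``cancellation'' observation unpacked. The only cosmetic difference is that you isolate the cancellation property of~$\varphi$ as a separate tool, whereas the paper applies $\psi$ directly; both rely on $\psi\circ\varphi\sim\id_X$ together with $\varphi\circ\psi=\id_Y$ and no control assumption on $f$ or $g$.
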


Interestingly, we prove that \cref{thmSurjective} does not have an analogue for injective conjugating maps. Indeed, not even one but both of the maps $\varphi$ and $\psi$ are injective in \cref{ex:disjointTheorem}.

\Cref{thmEmbeddings} and our \cref{thmSurjective} are complementary because when compared with \cref{conjecture}, the additional assumptions in the former concern the dynamics (namely, the map $g\colon Y\to Y$), and the additional assumptions in the latter concern the conjugating maps (namely, $\varphi\colon X\to Y$).

\bigskip
Finally, in \cref{ex:squares,ex:invertible,ex:disjointTheorem} we provide counterexamples 
whose underlying metric spaces are tame from the coarse-geometric perspective: In particular, they have finite asymptotic dimension and polynomial volume growth. Indeed, the metric spaces in \cref{ex:squares,ex:invertible} 
are halflines (or coarsely equivalent to halflines), while the metric space in \cref{ex:disjointTheorem} 
is a subset of $\RR^3$, and actually 
it can also be coarsely embedded in $\RR^2$. It would be interesting to know if \cref{conjecture2,conjecture} admit counterexamples where the asymptotic dimension of the underlying metric spaces vanishes.

\bigskip We present the constructions in the order of increasing difficulty of the corresponding proofs and tools.
\Cref{powers-section,subsec:main-sec} provide counterexamples to \cref{conjecture,conjecture2}, respectively, and \cref{thmSurjective} is accordingly proved in \cref{subsec:theorem}.

\section{Counterexamples to \texorpdfstring{\cref{conjecture2}}{Conjecture \ref{conjecture2}}}\label{powers-section}

In this section, we construct examples of self-maps $f\colon X\to X$ and $g\colon Y \to Y$ of metric spaces such that $f,g$ are coarsely conjugate, but there exists $n > 1$ such that $f^n$ and $g^n$ are not, that is, these are counterexamples to \cref{conjecture2}. More specifically, for any $k \in \NN_{\geq 1}$,  we obtain examples such that $f^n$ and $g^n$ are coarsely conjugate for all $n\leq k$ but not for any $n>k$.

In \cref{ex:squares} only the map $g$ is a bijection, but in \cref{ex:invertible} both dynamical systems are invertible. In both cases,  the conjugating maps $\varphi$ and $\psi$ are the identity on $X=Y$.

Throughout, for a subset $Z$ of a metric space $(X,d)$ and $B\geq 0$, we will denote by $\N_B(Z)$ the set of all points $x\in X$ such that $\inf_{z\in Z} d(x,z) \leq B$. If two maps $a$ and~$b$ from a set $X$ to a metric space $Y$ are close and $B \geq \sup_{x\in X} d_Y(a(x), b(x))$, we will also say that $a$ and~$b$ are \emph{$B$-close}.

\begin{longEx}\label{ex:squares}
Let $X=Y=[1,\infty)$ and let $g\colon Y\to Y$ be given by $g(x) = x^2$. For every $k\in \NN_{\geq 1}$ define $f_k\colon X\to X$ as $f_k(x) = {\ceil{x^{2^k}}}^{2^{-k+1}}$.
\end{longEx}

\begin{thm}\label{thm:squares}
Let $X=Y=[1,\infty)$, $g\colon Y\to Y$, and $f_k\colon X\to X$ for every $k\in \NN_{\geq 1}$ be as in \cref{ex:squares}. Then for every $k\in \NN_{\geq 1}$
\begin{itemize}
\item for every $n \in \set{\range 1 k}$ the maps $f_k^n$ and~$g^n$ are coarsely conjugate via the identity maps;
\item but for every $n>k$, there is no coarse equivalence $\psi\colon Y\to X$ such that $\psi\circ g^{n}$ and $f_k^{n}\circ \psi$ are close; in particular, $f_k^{n}$ and $g^{n}$ are not coarsely conjugate.
\end{itemize}
\end{thm}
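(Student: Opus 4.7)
The plan is to begin by eliminating all but the first ceiling in $f_k^n$, and then to split on whether $n \leq k$ or $n > k$. Because $m \defeq \lceil x^{2^k}\rceil$ is already a positive integer, a second application of $f_k$ sees no further rounding: $f_k(m^{2^{-k+1}}) = \lceil m^2 \rceil^{2^{-k+1}} = m^{2^{-k+2}}$. A short induction then gives the closed form
\[
f_k^n(x) = \lceil x^{2^k}\rceil^{\,2^{n-k}}.
\]
Setting $u \defeq x^{2^k}\geq 1$ and $\alpha \defeq 2^{n-k}$, one rewrites $g^n(x) = u^\alpha$ and $f_k^n(x) = \lceil u\rceil^\alpha$, so both halves of the theorem reduce to estimates on $\lceil u\rceil^\alpha - u^\alpha$ for $u \geq 1$.

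In the regime $n \leq k$, the exponent satisfies $\alpha \leq 1$, and subadditivity of $t\mapsto t^\alpha$ on $[0,\infty)$ gives
\[
0 \leq \lceil u\rceil^\alpha - u^\alpha \leq (\lceil u\rceil - u)^\alpha \leq 1.
\]
So $f_k^n$ and $g^n$ are $1$-close; since the identity of $[1,\infty)$ is trivially its own coarse inverse, this immediately yields a coarse conjugacy via the identity maps.

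For $n > k$, I would exploit sparseness of the image: since $2^{n-k}\geq 2$, the set $f_k^n(X) = \{m^{2^{n-k}} : m\in \NN_{\geq 1}\}$ has consecutive gaps $(m+1)^{2^{n-k}} - m^{2^{n-k}} \geq 2^{n-k}\, m^{\,2^{n-k}-1}$ growing without bound, so the midpoints of these gaps go to infinity in distance from $f_k^n(X)$. Now suppose towards contradiction that some coarse equivalence $\psi\colon Y\to X$ makes $\psi\circ g^n$ be $C$-close to $f_k^n\circ\psi$. Since $g^n$ is a bijection of $[1,\infty)$, this forces $\psi(Y)\subseteq \N_C(f_k^n(X))$. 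Taking any coarse inverse $\psi'$ of $\psi$ and using that $\psi\circ\psi'$ is close to $\id_X$ supplies a constant $D$ with $[1,\infty)\subseteq \N_D(\psi(Y))$, so $[1,\infty)\subseteq \N_{C+D}(f_k^n(X))$, contradicting the unbounded gaps. The only step that takes a moment's thought is the closed form for $f_k^n$; once one observes that the rounding collapses after the first iterate, both halves of the theorem reduce to a single elementary estimate.
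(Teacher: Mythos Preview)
Your proof is correct and follows essentially the same route as the paper: both derive the closed form $f_k^n(x)=\lceil x^{2^k}\rceil^{2^{n-k}}$ (the paper via the factorisation $f_k=g\circ\phi_k$ and the invariance of $X_k$, you by directly observing the ceiling collapses after one iterate), then bound $f_k^n-g^n$ for $n\le k$ (the paper uses the mean value theorem to get $2^{n-k}$, you use subadditivity of $t\mapsto t^\alpha$ to get $1$), and finally for $n>k$ both exploit that $f_k^n(X)$ has unbounded gaps while any coarse equivalence $\psi$ must have coarsely dense image. The only cosmetic difference is that the paper packages the image computation through the auxiliary sets $X_j$, whereas you work with $\{m^{2^{n-k}}:m\in\NN_{\geq 1}\}$ directly.
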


\begin{proof}
Let $k\in \ZZ$. Define $X_k\subseteq X$ and $\phi_k\colon X\to X$ by the following formulae:
\begin{equation}\label{phiK}
X_k = \set{m^{2^{-k}} \st[\big] m\in \NN_{\geq 1}}
\quad\text{and}\quad
\phi_k(x) = {\ceil{x^{2^k}}}^{2^{-k}}.
\end{equation}
Note that $\phi_k$ maps $X$ onto $X_k$ and is the identity on $X_k$.
Furthermore, $g(X_k) = X_{k-1} \subseteq X_k$, so in particular $g$ preserves $X_k$.

Fix $k\in \NN_{\geq 1}$ and note that $f_k = g\circ \phi_k$.
By the above observations, $f_k^n=(g\circ \phi_k)^n = g^n\circ \phi_k$ for every $n\geq 1$. Hence, applying the equality $g(X_k) = X_{k-1}$ (valid for every $k\in \ZZ$), we obtain
\begin{equation}\label{imageF}
f_k^n(X) = g^n \circ \phi_k(X) = g^n(X_k) = X_{k-n}.
\end{equation}

We claim that for $n\leq k$ the maps $f_k^n$ and $g^n$ are close and hence coarsely conjugate via the identity maps. Indeed, since $\phi_k(x)\geq x$ for all $x\in X$, we get that $f_k^n(x) = g^n\circ \phi_k(x) \geq g^n\circ \id(x) = g^n(x)$ for every $k$, so we get $0\leq f_k^n(x) - g^n(x)$. For~$n\leq k$, we have a matching upper bound:
\begin{equation}\label{closeness-ceil}
\begin{aligned}
f_k^n(x) - g^n(x) &= g^n\circ \phi_k(x) - g^n(x) \\
&= \left({\ceil{x^{2^k}}}^{2^{-k}}\right)^{2^n} - x^{2^n} \\
&= {\ceil{x^{2^k}}}^{2^{-k+n}} - \left(x^{2^k}\right)^{2^{-k+n}} \leq 2^{-k+n}, 
\end{aligned}
\end{equation}
where the inequality follows from the mean value theorem because
for $n \leq k$ the derivative of the map $y\mapsto y^{2^{-k+n}}$ is bounded by $2^{-k+n}$ on $[1,\infty)$.

Now, fix $n>k$ and suppose for a contradiction that there exists a coarse equivalence $\psi\colon Y\to X$ such that $\psi\circ g^{n}$ and $f_k^{n}\circ \psi$ are $R$-close for some $R > 0$. Since~$\psi$ is a coarse equivalence, there exists a constant $B > 0$ such that $\N_B(\psi(Y)) = X$ (this standard property of coarse equivalences follows from the fact that $\psi\circ \varphi$ is close to the identity on~$X$ for some coarse inverse~$\varphi$). Clearly, $g^{n}(Y) = Y$ for any $n$, so~$\N_B(\psi\circ g^n(Y)) = X$. By the $R$-closeness of $\psi\circ g^n$ and $f_k^n\circ \psi$, we also get 
\begin{equation}\label{imageF2}
\N_{B+R}(f_k^n\circ\psi(Y)) = X.
\end{equation}
On the other hand, from \eqref{imageF} we obtain that $f_k^n\circ\psi(Y) \subseteq f_k^n(X) = X_{k-n}$, and for $n>k$, we have $X_{k-n} \subseteq X_{-1}$. This is a contradiction with \eqref{imageF2} because $\N_C(f_k^n\circ\psi(Y)) \subseteq \N_C(X_{-1}) = \N_C(\{m^2 \st m\in \NN_{\geq 1}\})$ is a proper subset of $X=[1,\infty)$ for every $C>0$.
\end{proof}

The following remark shows that the analogue of the second bullet point of \cref{thm:squares} with $\psi$ replaced by $\varphi$ does not hold.

\begin{rem} Let $k\in \NN_{\geq 1}$ and let $f_k$ be as in \cref{ex:squares}. There is a coarse equivalence $\varphi_k\colon X\to Y$ such that for all $n\in \NN_{\geq 1}$ the map $\varphi_k\circ f_k^{n}$ equals $g^n\circ \varphi_k$, in particular these compositions are close.
\end{rem}
\begin{proof} We define $\varphi_k =\phi_k$ for $\phi_k$ from formula \eqref{phiK}.
Then, if we put $n=0$ in the calculation from line \eqref{closeness-ceil}, we see that $\varphi_k$ is close to the identity map on $[1,\infty)$, and hence a coarse equivalence.

Recall from the proof of
\cref{thm:squares} that, first, $f_k^{n} = g^n \circ \phi_k$, and, second, the map $g$ preserves $X_k$, $\phi_k(X)  = X_k$, and $\phi_k$ on $X_k$ is the identity. These facts give respectively the two equalities below:
\[\phi_k\circ f_k^{n} = \phi_k \circ g^n \circ \phi_k = g^n \circ \phi_k,\]
proving that indeed $\varphi_k\circ f_k^{n}=g^n\circ \varphi_k$.
\end{proof}

Consider the following statement for two metric spaces $X$ and $Y$ with self-maps $f\colon X\to X$ and $g\colon Y\to Y$: ``There is a coarse equivalence $\varphi\colon X\to Y$ such that $\varphi\circ f$ is close to $g\circ \varphi$''. This statement is precisely a half of the hypothesis of \cref{conjecture}. Example~2.7 of~\cite{GM} exemplifies that this statement is strictly weaker than coarse conjugacy, and in fact even strictly weaker than the hypothesis of \cref{conjecture}. Interestingly, while \cite{GM}*{Example~2.7} is related to \cref{conjecture}, we~prove in \cref{thm:squares} that a similar dynamical system yields counterexamples to \cref{conjecture2}.

In the proof of \cref{thm:squares}, the reason for the lack of coarse conjugacy between $f_k^n$ and $g^n$ for $n>k$ is exactly the fact that $f_k^n$ is not coarsely surjective and $g^n$~is.
(Coarse surjectivity means that the image is coarsely dense in the codomain, and a subset $A$ in a metric space $Z$ is coarsely dense if there exists a~constant $B \geq 0$ such that $\N_B(A)=Z$.) 
While in \cite{GM}*{Example~2.7} neither of $f$ and~$g$ is coarsely surjective and \cite{GM} uses a different argument to show that they are not coarsely conjugate, for certain natural subspaces the restrictions $f'$ of $f$ and $g'$ of~$g$ can be seen to be non-conjugate because $f'$ is not coarsely surjective and $g'$ is. 

\bigskip

The following \cref{ex:invertible} provides counterexamples to \cref{conjecture2} with the additional feature that both compared dynamical systems are invertible. In~particular, in order to show the lack of coarse conjugacy, we need a~more subtle invariant than coarse surjectivity discussed above.

\begin{longEx}\label{ex:invertible}
For every $k\in \NN_{\geq 1}$ let $X_k=Y_k = [0,\infty) \times \set{\range{0}{k}}  \subseteq \RR^2$ and let $f_k\colon X\to X$ and $g_k\colon Y\to Y$ be the bijective maps defined as follows. For $k\in \NN_{\geq 0}$ let
\[g_k(r,j) = (2^{\min(j,1)} r,\, j-1 \bmod k+1).\]
For $k\in \NN_{\geq 1}$, the map $f_k$ is the union of $g_{k-1}$ and multiplication by $2$ on the $k$th copy of the halfline $[0,\infty)$, namely:
\[f_k(r,j) = \begin{cases}
(2^{\min(j,1)} r,\; j-1 \bmod k) & \text{if } j\leq k-1,\\
(2 r,\; j) & \text{if } j = k.
\end{cases}
\]
\end{longEx}

 The second bullet point in \cref{invertible} is weaker than in \cref{thm:squares}: This deficiency is later removed in \cref{asdf} with a more delicate argument.

\begin{thm}\label{invertible}
Let $k\in \NN_{\geq 1}$ and let $X_k=Y_k = [0,\infty) \times \set{\range{0}{k}}  \subseteq \RR^2$ and the bijective maps $f_k\colon X\to X$ and $g_k\colon Y\to Y$ be as in \cref{ex:invertible}. Then
\begin{itemize}
\item for all $n \in \set{\range 1 k}$ the maps $f_k^n$ and $g_k^n$ are coarsely conjugate via the identity maps;
\item but there is no coarse equivalence $\varphi\colon X_k\to Y_k$ such that $\varphi \circ f_k^{k+1}$ and $g_k^{k+1}\circ \varphi$ are close; in particular, $f_k^{k+1}$ and $g_k^{k+1}$ are not coarsely conjugate.
\end{itemize}
\end{thm}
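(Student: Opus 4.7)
For the first bullet I would proceed by a direct computation. Since $f_k$ and $g_k$ coincide on halflines $1, \ldots, k-1$, a short induction on $n$ shows that for every $(r,j) \in X_k$ and $n \leq k$, the first coordinates of $f_k^n(r,j)$ and $g_k^n(r,j)$ agree (each orbit of length $n \leq k$ visits the $0$-state exactly the same number of times under both maps), while the second coordinates differ by at most $k$. Hence the identity map is a $k$-close coarse conjugacy.

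For the second bullet, set $F = f_k^{k+1}$ and $G = g_k^{k+1}$. A direct calculation gives $G(r,j) = (2^k r, j)$ on all of $Y_k$, whereas $F(r,k) = (2^{k+1} r, k)$ on halfline $k$ of $X_k$; the mismatch $2^{k+1} > 2^k$ between the scaling factors on halfline $k$ is what blocks coarse conjugacy. Assume for contradiction a coarse equivalence $\varphi\colon X_k \to Y_k$ with $\varphi\circ F$ being $R$-close to $G\circ\varphi$. To reduce to a one-dimensional problem, let $\iota\colon [0,\infty) \to X_k$ be $r \mapsto (r,k)$ and $\pi\colon Y_k \to [0,\infty)$ be $(r,j) \mapsto r$; both are coarse equivalences (with the obvious coarse inverses). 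Then $\tilde\varphi \defeq \pi\circ\varphi\circ\iota\colon [0,\infty) \to [0,\infty)$ is a coarse equivalence, and since $\pi$ is $1$-Lipschitz and $\pi\circ G = 2^k\cdot\pi$, the closeness hypothesis specialises to
\[ \tilde\varphi(2^{k+1} r) = 2^k \tilde\varphi(r) + O(R) \quad \text{for every } r \geq 0. \]

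Let $\tilde\psi$ be a coarse inverse of $\tilde\varphi$. The main technical step is to apply $\tilde\psi$ to the above and exploit $\tilde\psi\circ\tilde\varphi \sim \id \sim \tilde\varphi\circ\tilde\psi$ to derive the dual functional equation
\[ \tilde\psi(2^k s) = 2^{k+1} \tilde\psi(s) + O(1) \quad \text{for every } s \geq 0, \]
where the $O(1)$ is an \emph{absolute} constant depending on $k$, $R$, and the control data of $\tilde\varphi, \tilde\psi$ but not on $s$. The difficulty here is to absorb the propagated closeness constants into a single bound rather than one growing with iteration.

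Iterating this equation yields $\tilde\psi(s_0\cdot 2^{km}) \geq c\cdot 2^{(k+1)m}$ for some $c>0$, provided $s_0$ is chosen so that $\tilde\psi(s_0)$ exceeds a threshold determined by the $O(1)$-constant above (possible since $\tilde\psi(s) \to \infty$ as $s \to \infty$, a consequence of $\tilde\psi$ being a coarse embedding). On the other hand, $|\tilde\psi(n+1) - \tilde\psi(n)| \leq \rho_{\tilde\psi}(1) \eqdef M$ for every integer $n\geq 0$, so telescoping yields the linear bound $\tilde\psi(s) \leq Ms + O(1)$. Substituting $s = s_0\cdot 2^{km}$ gives $\tilde\psi(s_0\cdot 2^{km}) = O(2^{km})$, which contradicts the lower bound for large $m$ since $2^{(k+1)m}/2^{km} = 2^m \to \infty$.
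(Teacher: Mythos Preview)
Your argument is correct. The first bullet matches the paper's direct computation: for $n\leq k$ the iterates $f_k^n$ and $g_k^n$ agree on the first coordinate and differ only on the bounded second coordinate, so the identity is a coarse conjugacy.

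For the second bullet, however, the paper takes a much shorter and more conceptual route than you do. It simply observes that $g_k^{k+1}(r,j)=(2^k r,j)$ is a controlled map, while $f_k^{k+1}$ is \emph{not} controlled: the points $(m,0)$ and $(m,k)$ are at distance~$k$, but their images $(2^{k-1}m,k-1)$ and $(2^{k+1}m,k)$ are roughly $3\cdot 2^{k-1}m$ apart. Since a composition of controlled maps is controlled, $g_k^{k+1}\circ\varphi$ is controlled for any coarse equivalence~$\varphi$, whereas $\varphi\circ f_k^{k+1}$ cannot be controlled (a coarse equivalence post-composed with a non-controlled map is non-controlled). Two close maps are either both controlled or both not, so closeness fails. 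Your approach instead reduces to a one-dimensional functional equation and compares exponential growth rates; this is valid and is in fact close in spirit to the paper's later \cref{qwerty}, which is needed to prove the stronger \cref{asdf} covering all $n>k$ (where the controlledness trick no longer suffices, since $f_k^n$ can be controlled for $n>k+1$ not divisible by $k+1$). So your method is more laborious here but generalises more readily, while the paper's argument is slicker for the specific case $n=k+1$ but does not extend.
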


\begin{proof}
For $k\in \NN_{\geq 1}$ and $0\leq n\leq k$ we have
\[g_k^n(r,j) = \begin{cases}
(2^{n-1} r,\;    j-n \bmod k + 1) &\text{if }j < n,\\
(2^n r,\;    j-n \bmod k + 1) &\text{if }n \leq j ,
\end{cases}
\]
and
\[f_k^n(r,j) = \begin{cases}
(2^{n-1} r,\; j-n \bmod k) &\text{if }j < n,\\
(2^n r,\; j-n \bmod k) &\text{if } n \leq j < k , \\
(2^n r,\; j) &\text{if } j=k
\end{cases}
\]
(for $n=k$, the second condition $n \leq j < k$ is empty),
that is, $f_k^n$ differs from $g_k^n$ only at the second coordinate, so these maps are close, and hence coarsely conjugate via the identity maps.

However, for $n=k+1$ we have
\begin{equation}\label{x}
g_k^{k+1}(r,j) = (2^k r,j),
\end{equation}
and
\begin{equation}\label{y}
f_k^{k+1}(r,j) = \begin{cases}
(2^{k-1+\min(j,1)} r,\; j-1 \bmod k) &\text{if }j < k,\\
(2^{k+1} r,\, j) &\text{if } j=k.
\end{cases}
\end{equation}
Since $g_k^{k+1}$ is the identity on the second coordinate and multiplication by~$2^k$ on the first coordinate, it is a controlled map. However, $f_k^{k+1}$ is not controlled: For $m\geq 0$ the points $(m,0)$ and $(m,k)$ are $k$ apart, but they are mapped to the points $(2^{k-1}m, k-1)$ and $(2^{k+1}m, k)$, whose maximum distance $\max(3\cdot 2^{k-1}m, 1)$ goes to infinity with~$m$. Now, if $\varphi\colon X_k\to Y_k$ is a coarse equivalence, $g_k^{k+1}\circ \varphi$ is controlled as a composition of controlled maps, while it is easy to see that $\varphi \circ f_k^{k+1}$ is not controlled (cf.~Proposition~2.8 in \cite{GM}). Hence, $\varphi \circ f_k^{k+1}$ and $g_k^{k+1}\circ \varphi$ cannot be close.
\end{proof}

\begin{cor}\label{moreNonConjugacies}
Let $k\in \NN_{\geq 1}$ and let $X_k$, $Y_k$, $f_k$, and $g_k$ be as in \cref{ex:invertible}.
Then, for every $l\in \NN_{\geq 1}$
there is no coarse equivalence $\varphi\colon X_k\to Y_k$ such that $\varphi \circ f_k^{l(k+1)}$ and $g_k^{l(k+1)}\circ \varphi$ are close; in particular, $f_k^{l(k+1)}$ and $g_k^{l(k+1)}$ are not coarsely conjugate.
\end{cor}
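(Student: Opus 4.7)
The plan is to extend the non-controlledness argument from the proof of \cref{invertible}. I will show that $g_k^{l(k+1)}$ is controlled while $f_k^{l(k+1)}$ is not, and then conclude exactly as in \cref{invertible}: if a coarse equivalence $\varphi$ with coarse inverse $\psi$ made $\varphi \circ f_k^{l(k+1)}$ close to $g_k^{l(k+1)}\circ \varphi$, then composing on the left with $\psi$ would show that $f_k^{l(k+1)}$ is close to $\psi \circ g_k^{l(k+1)}\circ \varphi$ (because $\psi\circ\varphi$ is close to $\id$), and the latter would be controlled as a composition of controlled maps, forcing $f_k^{l(k+1)}$ itself to be controlled, a contradiction.

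For $g_k$, one sees directly that it cyclically permutes the $k+1$ levels and multiplies the first coordinate by $1$ at level $0$ and by $2$ elsewhere; one full cycle therefore produces the factor $2^{k}$ and returns to the starting level, so $g_k^{l(k+1)}(r,j)=(2^{lk}r,j)$, a product of a Lipschitz dilation and the identity, hence controlled. For $f_k$, the level $k$ is preserved, giving $f_k^{l(k+1)}(r,k)=(2^{l(k+1)}r,k)$ by direct iteration; on the remaining levels $\set{\range 0 {k-1}}$ the map $f_k$ acts as $g_{k-1}$, cyclically shifting through these $k$ levels with multiplier $1$ at level $0$ and $2$ elsewhere. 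Writing $l(k+1)=lk+l$, an orbit starting at a level in $\set{\range 0 {k-1}}$ completes at least $l$ full $k$-cycles during $l(k+1)$ steps and hence visits level $0$ at least $l$ times, so the accumulated first-coordinate multiplier is at most $2^{l(k+1)-l}=2^{lk}$.

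Comparing: the points $(m,k)$ and $(m,k-1)$ are at Euclidean distance $1$ in $X_k$, yet $f_k^{l(k+1)}$ maps them to points whose first coordinates differ by at least $2^{l(k+1)}m-2^{lk}m=2^{lk}(2^l-1)m\to\infty$, so $f_k^{l(k+1)}$ is not controlled, which completes the argument. The only delicate step, and the main obstacle in the plan, is the orbit-counting bound giving the $2^{lk}$ estimate below level $k$; everything else is either direct iteration or a transcription of the final paragraph of the proof of \cref{invertible}.
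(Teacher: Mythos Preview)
Your proof is correct and follows essentially the same approach as the paper: both show that $g_k^{l(k+1)}(r,j)=(2^{lk}r,j)$ is controlled while $f_k^{l(k+1)}$ is not, and then conclude as in \cref{invertible}. The only cosmetic differences are that the paper compares $(r,k)$ with $(r,0)$ and writes down the exact multiplier $2^{l(k+1)-\ceil{l(k+1)/k}}$, whereas you compare $(m,k)$ with $(m,k-1)$ and use the cleaner upper bound $2^{lk}$ obtained from the ``at least $l$ full cycles'' count; either pair works.
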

\begin{proof}
The map $g_k^{l(k+1)}$ satisfies $g_k^{l(k+1)}(r,j) = (2^{lk} r,j)$ by \eqref{x} and hence is controlled. On the other hand, $f_k^{l(k+1)}(r,k) = (2^{l(k+1)} r, k)$ by \eqref{y} and
\[f_k^{l(k+1)}(r,0) = \left(2^{l(k+1) - \ceil{l(k+1)/k}} r,\; -l \bmod k\right),\]
so $f_k^{l(k+1)}$ is not controlled for similar reasons as $f_k^{k+1}$ in the proof of  \cref{invertible}. Therefore, the claim follows from the same argument as in the proof of \cref{invertible}.
\end{proof}

The following \cref{qwerty} will be used twice: in the proof of \cref{asdf} below and crucially in \cref{main-section}.
The auxiliary \cref{QIlocal} is a special case of the standard fact that a coarse equivalence between \emph{quasi-geodesic} spaces is a \emph{quasi-isometry}. We
provide a short proof for self-containment.

\begin{lem}\label{QIlocal}
Let $X$ be a metric space, $K\in \NN_{\geq 1}$, and $Y = [0,\infty)\times \{1,\ldots, K\} \subseteq \RR^2$. If $\Phi\colon X\to Y$ is a coarse equivalence, then there are constants $C>0$ and $A\geq 0$ such that for every $x,x' \in X$
\begin{equation*}
d(\Phi(x), \Phi(x')) \geq \tfrac{d(x,x')}{C} - A.
\end{equation*}
\end{lem}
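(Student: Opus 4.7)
The plan is to exploit that $Y$ is quasi-geodesic in a very concrete way: any two points $(s,i), (s',i') \in Y$ can be joined by a chain with consecutive distances at most $1$, constructed by first traversing the halfline at height $i$ from $s$ to $s'$ in roughly $|s-s'|$ unit steps and then switching rows in at most $K-1$ further unit steps. This produces a chain $\Phi(x) = y_0, y_1, \ldots, y_N = \Phi(x')$ in $Y$ satisfying $d(y_j, y_{j+1}) \leq 1$ and $N \leq C_0 \cdot d(\Phi(x), \Phi(x')) + C_1$ for constants $C_0, C_1$ depending only on $K$.

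I would then transfer this chain to $X$ by applying a coarse inverse $\Psi \colon Y \to X$ of $\Phi$. If $\rho_\Psi$ is a control function for $\Psi$, then each jump $d(\Psi(y_j), \Psi(y_{j+1}))$ is bounded by $\rho_\Psi(1)$, so telescoping along the chain yields
\[
d(\Psi\circ\Phi(x),\, \Psi\circ\Phi(x')) \;\leq\; N \cdot \rho_\Psi(1) \;\leq\; \rho_\Psi(1)\bigl(C_0\cdot d(\Phi(x),\Phi(x')) + C_1\bigr).
\]
Since $\Psi\circ\Phi$ is $B$-close to $\id_X$ for some $B \geq 0$, the triangle inequality gives $d(x, x') \leq 2B + d(\Psi\Phi(x), \Psi\Phi(x'))$, which rearranges to the desired lower bound with $C \defeq C_0\,\rho_\Psi(1)$ and an $A$ absorbing $2B$ together with $C_1 \rho_\Psi(1)$.

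There is no serious obstacle here; the argument reduces to the (immediate) quasi-geodesic structure of $Y$ as a finite union of halflines glued along an interval. It is worth emphasizing that the hypothesis that $\Phi$ be a coarse equivalence---rather than merely controlled---is essential: being controlled alone yields only the matching upper bound $d(\Phi(x),\Phi(x')) \leq C\cdot d(x,x') + A$, whereas obtaining the lower bound genuinely requires the coarse inverse $\Psi$ in order to pull a chain in $Y$ back to a controlled chain in $X$.
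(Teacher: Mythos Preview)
Your proposal is correct and follows essentially the same approach as the paper's proof: build a unit-step chain in $Y$ between $\Phi(x)$ and $\Phi(x')$, push it through the coarse inverse $\Psi$, telescope, and then use that $\Psi\circ\Phi$ is $B$-close to $\id_X$. The only cosmetic difference is the chain construction---the paper interpolates both coordinates simultaneously via $y_i=\bigl(\tfrac{(n-i)s+is'}{n},\,\lfloor\tfrac{(n-i)k+ik'}{n}\rfloor\bigr)$ with $n=\lceil d(y,y')\rceil$ in the maximum norm, whereas you move horizontally and then vertically; both yield $N\leq d(\Phi(x),\Phi(x'))+O(1)$ and lead to the same constants up to renaming.
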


\begin{proof}
We have not specified the norm on $\RR^2\supseteq Y$ because it is irrelevant by the equivalence of all norms, but for the proof let us fix the maximum norm.

By definition, $\Phi$ admits a coarse inverse $\Psi\colon Y\to X$.
Since $\Psi$ is a controlled map, there is a non-decreasing $\rho \colon [0,\infty)\to [0,\infty)$ such that
\[d(\Psi(y), \Psi(y')) \leq \rho\circ d(y,y')\]
for all $y,y'\in Y$.

Let $y,y'\in Y$. For $n=\ceil{d(y,y')}$ there is a sequence $y=y_0, y_1, \ldots, y_n=y'$ in~$Y$ such that $d(y_i, y_{i+1})\leq 1$ for all $i < n$. Indeed, for $(s,k) = y$ and $(s',k') = y'$, one can take $y_i \defeq \big(\frac{(n-i)s + is'}{n}, \big\lfloor \frac{(n-i)k + ik'}{n}\big\rfloor\big)$. (The existence of such sequences is also crucial in 
\cref{lemma-split}.) Hence, by the triangle inequality, we have
\begin{equation*}\label{QI2}
\begin{aligned}
d(\Psi(y), \Psi(y')) &\leq \sum_{i < n} d(\Psi(y_i), \Psi(y_{i+1})) \leq \sum_{i < n} \rho \circ d(y_i, y_{i+1}) \\
&\leq n \cdot \rho(1) \leq \rho(1) \cdot d(y,y') + \rho(1)
\end{aligned}
\end{equation*}
(in particular it follows that $\rho(1)\neq 0$).

Let $B$ be such that $\Psi\circ \Phi$ is $B$-close to the identity on $X$.
We have
\[d(x,x') \leq d(\Psi\circ\Phi(x), \Psi\circ\Phi(x')) + 2B \leq \rho(1)\cdot d(\Phi(x), \Phi(x')) + \rho(1) + 2B,\]
which gives
\begin{equation*}
d(\Phi(x), \Phi(x')) \geq \tfrac{d(x,x')}{\rho(1)} - 1 - \tfrac{2B}{\rho(1)}.\qedhere
\end{equation*}
\end{proof}

\begin{prop}\label{qwerty} Let $J\text{, }K\in \NN_{\geq 1}$ and let $X = [0,\infty)\times \{1,\ldots, J\}$ and $Y = [0,\infty)\times \{1,\ldots, K\}$ be subsets of $\RR^2$. Assume that $f\colon X\to X$, $x_0\in X$, and $F>1$ are such that $r_n\geq F^n$ for $(r_n, j_n) \defeq f^n(x_0)$ and every $n\in \NN_{\geq 1}$. Assume further that $g\colon Y \to Y$ and $G<F$ are such that $t\leq G s$ for $(t, l)\defeq g(s,k)$ and every $(s, k)\in Y$.
Then there is no coarse equivalence $\Phi\colon X\to Y$ such that $\Phi\circ f$ and $g\circ\Phi$ are close; in~particular $f$ and $g$
are not coarsely conjugate.
\end{prop}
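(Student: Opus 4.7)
The approach is to argue by contradiction: suppose $\Phi\colon X\to Y$ is a coarse equivalence such that $\Phi\circ f$ is $B$-close to $g\circ\Phi$ for some $B\geq 0$, and write $\Phi(f^n(x_0))=(s_n,k_n)$ for $n\geq 0$. The plan is to sandwich the scalar sequence $(s_n)_n$ between a lower bound of order $F^n$ and an upper bound of order $G^n$; since $F>G$ and $F>1$, the two bounds will become incompatible for large~$n$.

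For the upper bound, I would apply the $B$-closeness of $\Phi\circ f$ and $g\circ\Phi$ at the point $f^n(x_0)$, so that the first coordinate of $\Phi(f^{n+1}(x_0))$ differs by at most $B$ from that of $g(\Phi(f^n(x_0)))$. Combined with the hypothesis $t\leq Gs$, this yields the recursion $s_{n+1}\leq Gs_n+B$, which unfolds to $s_n\leq G^n s_0 + B\sum_{i=0}^{n-1}G^i$. Regardless of whether $G<1$, $G=1$, or $G>1$, this is bounded by $\alpha G^n+\beta$ for suitable constants (the borderline case $G=1$ gives a linear-in-$n$ bound, still negligible against $F^n$).

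For the lower bound, \cref{QIlocal} supplies constants $C>0$ and $A\geq 0$ with
\[d(\Phi(x_0),\Phi(f^n(x_0)))\geq \frac{d(x_0,f^n(x_0))}{C}-A.\]
From $r_n\geq F^n$ and the fact that $x_0$ is fixed, the max-norm distance $d(x_0, f^n(x_0))$ is at least $F^n-r_0$, so the right-hand side is at least $F^n/C - O(1)$. On the other hand, since the second coordinates of points of $Y$ lie in $\{1,\dots,K\}$, in the maximum norm $d(\Phi(x_0),\Phi(f^n(x_0)))\leq s_n+s_0+K$. Therefore $s_n\geq F^n/C - O(1)$.

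Combining the two estimates gives $F^n/C-O(1)\leq s_n\leq \alpha G^n+\beta$, which forces $(F/G)^n$ to stay bounded and contradicts $F>G$ for $n$ sufficiently large. I do not anticipate a substantial obstacle; the two minor technical points are to handle the regimes $G<1$, $G=1$, and $G>1$ uniformly when unfolding the recursion, and to work consistently with the maximum norm on $\mathbb{R}^2$ (the norm explicitly fixed in the proof of \cref{QIlocal}, and equivalent to any other choice up to adjusting the constants).
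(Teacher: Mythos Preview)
Your argument is correct and follows essentially the same route as the paper: an upper bound on $s_n$ of order $G^n$ obtained by iterating the recursion $s_{n+1}\leq Gs_n + B$ coming from closeness and the growth hypothesis on~$g$, and a lower bound of order $F^n$ obtained from \cref{QIlocal} together with the growth hypothesis on~$f$. The only cosmetic differences are that the paper uses a fixed reference point (namely $(0,0)$, which should really be $(0,1)$) rather than $x_0$ in the application of \cref{QIlocal}, and that you are more careful in flagging the case $G\leq 1$ when unwinding the recursion.
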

\begin{proof}
Suppose for a contradiction that there exist a coarse equivalence $\Phi\colon X\to Y$  and $D\geq 0$ such that $\Phi\circ f$ is $D$-close to $g\circ\Phi$.
Denote $(s, k)\defeq \Phi(x_0)$, and then for $(t_1, l_1)\defeq g\circ \Phi(x_0)$ we have $t_1\leq G s$. If we denote $(s_1,k_1)\defeq \Phi\circ f(x_0)$, then in particular $|t_1-s_1| \leq D$, and hence $s_1\leq G s + D$.

Then for $(t_2,l_2) \defeq g \circ \Phi \circ f(x_0)$ we have
\[t_2 \leq G s_1 \leq G^2 s + G D,\]
and for $(s_2,k_2)\defeq \Phi \circ f \circ f(x_0)$ we get
\[s_2 \leq t_2 + D \leq G^2 s + G D + D.\]
Similarly, for $(t_3, l_3) \defeq g\circ \Phi \circ f^2(x_0)$ we obtain
\[t_3 \leq G s_2 \leq G^3 s + G^2 D + G D,\]
and for $(s_3, k_3)\defeq \Phi \circ f \circ f^2(x_0)$ we have
\[s_3 \leq t_3 + D \leq G^3 s + G^2 D + G D + D.\]
Inductively, we conclude that for $(s_n, k_n) \defeq \Phi \circ f^n(x_0)$ one has
\begin{equation}\label{contradiction-part-1a}
s_n \leq \sum_{i=0}^n G^i \max(D,s) = \frac{G^{n+1} - 1}{G - 1} \max(D,s) = c G^n + a,
\end{equation}
where $c \defeq G\max(D,s)/(G-1)$ and $a \defeq -\max(D,s)/(G-1)$ do not depend on~$n$.

Denote $(s', k') \defeq \Phi(0,0)$. Applying \cref{QIlocal} to the pair of points $f^n(x_0)$ and~$(0,0)$, we obtain
\begin{gather}
\begin{aligned}\label{contradiction-part-2a}
\max\left(|s_n - s'|,\, |k_n - k'|\right) &= d\left(\Phi\left(f^n(x_0)\right), \Phi(0,0)\right) \\
&\geq \frac{d\left(f^n(x_0), (0,0)\right)}{C} - A \\
&\geq \frac{F^n}{C} - A.
\end{aligned}
\end{gather}
Since $s'$ and $k'$ are fixed, and $k_n$ is bounded by $K$, inequality \eqref{contradiction-part-2a} implies that~$s_n$ grows at least as fast as the exponential function with base $F$, which yields a~contradiction with \eqref{contradiction-part-1a} because $F > G$.
\end{proof}

The following strengthens \cref{moreNonConjugacies}, showing that $f_k^n$ and $g_k^n$ are not coarsely conjugate for all $n>k$, not only for $n$ of the form $l(k+1)$.

\begin{thm}\label{asdf} 
Let $k\in \NN_{\geq 1}$ and let $X_k$, $Y_k$, $f_k$, and $g_k$ be as in \cref{ex:invertible}. Then, for every $n > k$ there is no coarse equivalence $\varphi\colon X_k\to Y_k$ such that $\varphi \circ f_k^{n}$ and $g_k^{n}\circ \varphi$ are close; in particular, $f_k^{n}$ and $g_k^{n}$ are not coarsely conjugate.
\end{thm}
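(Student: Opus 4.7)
The plan is to invoke \cref{qwerty} with $f \defeq f_k^n$ and $g \defeq g_k^n$, exploiting the fact that $f_k$ possesses an invariant halfline on which it acts by pure doubling (the top strand $[0,\infty)\times\set{k}$), whereas every application of $g_k$ is periodically forced to visit the ``slow'' point $j=0$, at which its multiplier drops from~$2$ to~$1$.

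To supply the first hypothesis of \cref{qwerty}, I take $x_0 \defeq (1,k) \in X_k$ and $F \defeq 2^n$. Because $f_k$ restricts on $[0,\infty)\times\set{k}$ to the map $(r,k)\mapsto (2r,k)$, iteration yields $(f_k^n)^m(x_0) = f_k^{nm}(x_0) = (2^{nm},k)$, so the first coordinate $r_m = 2^{nm} = F^m$ fulfils the required exponential growth.

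The main step is then to establish the matching upper bound $t \leq 2^{n-1}s$ for $(t,l') \defeq g_k^n(s,l)$, valid uniformly over $(s,l) \in Y_k$, which lets me take $G \defeq 2^{n-1}$. By definition, one iteration of $g_k$ multiplies the first coordinate by $1$ when the current second coordinate equals~$0$, and by~$2$ otherwise. Since $g_k$ cyclically rotates the second coordinate through $\set{\range{0}{k}}$, any $k+1$ consecutive iterations visit $j=0$ exactly once, so their combined multiplier equals $2^k$. Writing $n = (k+1) + r$ with $r \geq 0$ (possible thanks to the standing hypothesis $n > k$), the $n$-fold composition $g_k^n$ therefore multiplies the first coordinate by at most $2^k \cdot 2^r = 2^{n-1}$, independently of~$l$.

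Since $G = 2^{n-1} < 2^n = F$, \cref{qwerty} applies and delivers the claim. The only point worth watching is precisely this strict gap: the hypothesis $n > k$ is exactly what forces at least one ``slow'' step within every $n$ iterations of $g_k$, producing the inequality $G < F$ on which \cref{qwerty} turns.
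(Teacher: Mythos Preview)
Your proof is correct and follows essentially the same route as the paper's: both apply \cref{qwerty} with $x_0=(1,k)$, $F=2^n$, and $G=2^{n-1}$, using that $f_k^n$ acts as multiplication by $2^n$ on the top strand while $g_k^n$ multiplies the first coordinate by at most $2^{n-1}$ because the cyclic orbit of length $k+1$ forces at least one visit to the slow index $j=0$ within any $n>k$ steps. The only cosmetic difference is that the paper records the explicit formula $g_k^{n}(r,j)=\bigl(2^{\,n-\lceil (n-j)/(k+1)\rceil}r,\ j-n\bmod k+1\bigr)$ and bounds the exponent directly, whereas you argue combinatorially via the decomposition $n=(k+1)+r$.
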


\begin{proof}
For $n>k$, we have
\[g_k^{n}(r,j) = \left(2^{n-\ceil{\frac{n-j}{k+1}}} r,\, j-n\bmod k+1\right).\]
Importantly, $2^{n-\ceil{\frac{n-j}{k+1}}} \leq 2^{n-1}$ since $j\leq k < n$. On the other hand, $f_k^n(r,k) = (2^n r,k)$. Hence, the claim follows from \cref{qwerty} for $F=2^n$, $G=2^{n-1}$, and $x_0=(1,k)$.
\end{proof}

\section{A true version of and a counterexample to \texorpdfstring{\cref{conjecture}}{Conjecture \ref{conjecture}}}\label{main-section}

\subsection{Surjectivity of conjugating maps}\label{subsec:theorem}

While in \cref{subsec:main-sec} we provide counterexamples to \cref{conjecture}, we show below
that if one assumes $\varphi$ to be surjective, then \cref{conjecture} holds even after omitting all the conditions on $\psi$ (of course the roles of $\varphi$ and $\psi$ are symmetric).
Hence, the following \cref{surjective-conjecture} is another true version of \cref{conjecture} after \cite{GM}*{Proposition~2.8} (recalled as \cref{thmEmbeddings} in the present note); in fact it is complementary to \cite{GM}*{Proposition~2.8} as explained in the Introduction.

\begin{prop}\label{surjective-conjecture} If there exists a \emph{surjective} coarse equivalence $\varphi\colon X\to Y$ such that
$\varphi\circ f$ and $g\circ\varphi$ are close, then $f$ and $g$ are coarsely conjugate via $\varphi$ and any $\psi\colon Y\to X$ such that $\varphi \circ \psi = \id_Y$.
\end{prop}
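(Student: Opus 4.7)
The plan is to first establish a strong coarse-embedding property of $\varphi$: namely, that there exists a non-decreasing $\sigma\colon [0,\infty)\to [0,\infty)$ with $d_X(a,b) \leq \sigma(d_Y(\varphi(a), \varphi(b)))$ for all $a,b\in X$. This follows by picking any coarse inverse $\psi_0$ of $\varphi$ (which exists because $\varphi$ is a coarse equivalence, with no use of surjectivity) and writing $d_X(a,b)\leq d_X(a,\psi_0\varphi(a)) + d_X(\psi_0\varphi(a), \psi_0\varphi(b)) + d_X(\psi_0\varphi(b),b) \leq 2B + \rho_{\psi_0}(d_Y(\varphi(a),\varphi(b)))$, where $B$ bounds the distance between $\psi_0\circ\varphi$ and $\id_X$, and $\rho_{\psi_0}$ is a control function for $\psi_0$. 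In particular, fibers of $\varphi$ have diameter at most $2B$.

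Next I would verify that the given right inverse $\psi$ of $\varphi$ is automatically a coarse inverse. The relation $\varphi\circ\psi = \id_Y$ trivially gives one closeness condition. For $\psi\circ\varphi$ close to $\id_X$, observe that $\varphi(\psi\varphi(x)) = \varphi(x)$, so both points lie in the same fiber of $\varphi$, whose diameter is bounded by $2B$ from the previous step. For controllability of $\psi$, apply the coarse-embedding inequality to the pair $\psi(y), \psi(y')$: since $\varphi(\psi(y)) = y$ and $\varphi(\psi(y')) = y'$, one gets $d_X(\psi(y), \psi(y')) \leq \sigma(d_Y(y,y'))$. This is where surjectivity matters, via the choice of $\psi$.

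The last task is to show $\psi\circ g$ is close to $f\circ\psi$. Starting from the hypothesis that $\varphi\circ f$ is, say, $C$-close to $g\circ\varphi$, specialise to $x = \psi(y)$: then $\varphi(f(\psi(y)))$ is $C$-close to $g(\varphi(\psi(y))) = g(y) = \varphi(\psi(g(y)))$. Applying the coarse-embedding inequality once more, $d_X(f(\psi(y)), \psi(g(y))) \leq \sigma(C)$ uniformly in $y$, as required.

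I don't anticipate a genuine obstacle; the entire argument is a clean bookkeeping exercise once the coarse-embedding inequality for $\varphi$ is in hand. The only mildly delicate point is the observation that fibers of a coarse equivalence are uniformly bounded, which underlies both the closeness of $\psi\circ\varphi$ to $\id_X$ and the transfer of the closeness $\varphi f \sim g\varphi$ from $Y$ back to $X$; after that, all three remaining verifications are immediate applications of the same inequality.
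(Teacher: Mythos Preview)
Your argument is correct and follows essentially the same route as the paper. The paper defers the verification that any section $\psi$ is a coarse inverse to a ``standard exercise'' and then obtains $\psi\circ g \sim f\circ\psi$ by the formal chain $\psi\circ\varphi\circ f\circ\psi \sim \psi\circ g\circ\varphi\circ\psi = \psi\circ g$ together with $\psi\circ\varphi \sim \id_X$, whereas you unpack all three verifications through the single coarse-embedding inequality $d_X(a,b)\leq\sigma\bigl(d_Y(\varphi(a),\varphi(b))\bigr)$; the two packagings are equivalent.
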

\begin{proof}
The requirement that $\varphi \circ \psi = \id_Y$ is equivalent to the fact that $\psi(y) \in \varphi^{-1}(y)$ for every $y\in Y$. By surjectivity, the inverse images $\varphi^{-1}(y)$ are non-empty, and hence such $\psi$ exist. It is a standard exercise that any such $\psi$ is a coarse equivalence, and that $\psi\circ \varphi$ is close to the identity on $X$.

Pick any $\psi$ as above. From the fact that $\varphi \circ f \sim g\circ \varphi$ (where $\sim$ denotes the closeness of maps), we conclude that
\begin{equation}\label{closeness1}
\psi \circ \varphi \circ f \circ \psi \sim \psi \circ g \circ \varphi \circ \psi = \psi \circ g
\end{equation}
because post-composition with a controlled map and pre-composition preserve closeness. Using again invariance under pre-composition, we also obtain
\begin{equation}\label{closeness2}
\psi \circ \varphi \circ f \circ \psi  \sim \id_X \circ f \circ \psi  = f \circ \psi,
\end{equation}
and putting \eqref{closeness1} and \eqref{closeness2} together we conclude that
$\psi \circ g \sim f \circ \psi$.
\end{proof}

Any injective coarse equivalence $\psi\colon Y\to X$ admits a surjective coarse inverse $\varphi\colon X\to Y$ such that $\varphi\circ\psi = \id_Y$. However, the counterexample from \cref{ex:disjointTheorem} involves injective functions $\varphi$ and $\psi$, showing that surjectivity cannot be replaced with injectivity in \cref{surjective-conjecture}. Important here is not the mere existence of a~surjective coarse equivalence $\varphi$, but the fact that $g\circ \varphi$ is close to $\varphi \circ f$, which gives control of $g$ over all of its domain.

\subsection{Counterexample}\label{subsec:main-sec}
The following is a counterexample to \cref{conjecture}.

\begin{longEx}\label{ex:disjointTheorem}
Let
\begin{align*}
X &= \left\{ \left(n^2, r, k\right) \in \RR^3 \st[\Big] n\in \NN_{\geq 1},\ r\in [0,\infty),\ k\in \{1,\ldots, 2n+1\} \right\}\textrm{ and }\\
Y &=  \left\{ \left(n^2, r, k\right) \in \RR^3 \st[\Big] n\in \NN_{\geq 1},\ r\in [0,\infty),\ k\in \{1,\ldots, 2n\} \right\}\textrm{,}
\end{align*}
and let $g\colon Y\to Y$ be the restriction of the bijective map $f\colon X\to X$ given by
\[f\left(n^2, r, k\right) = \left(n^2, kr, k\right).\]
Let $\varphi\colon X\to Y$ and $\psi\colon Y\to X$ be given by
\[\varphi\left(n^2, r, k\right) = \left((n+1)^2, r, k\right) \quad \text{and} \quad \psi\left(n^2, r, k\right) = \left(n^2, r, k\right). \]
\end{longEx}

\begin{thm}\label{disjointTheorem}
Let $X$, $Y$, $f\colon X\to X$, $g\colon Y\to Y$, $\varphi\colon X\to Y$, and $\psi\colon Y\to X$ be as in \cref{ex:disjointTheorem}. Then, the dynamical systems $(X,f)$ and $(Y,g)$ together with the maps $\varphi$ and $\psi$ are counterexamples to \cref{conjecture}, i.e.\ $\varphi$ and $\psi$ are coarse equivalences, and furthermore $\varphi\circ f$ is close to $g\circ\varphi$ and $\psi\circ g$ is close to
$f\circ\psi$, but $f$ and $g$ are not coarsely conjugate.
\end{thm}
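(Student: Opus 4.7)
The plan is to verify the elementary claims about $\varphi,\psi$ being coarse equivalences with $\varphi\circ f=g\circ\varphi$ and $\psi\circ g=f\circ\psi$ by direct inspection, and then to show that $f$ and $g$ are not coarsely conjugate via a slab-counting argument built on \cref{qwerty}.

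Denote the slabs by $X_n:=\{n^2\}\times[0,\infty)\times\{1,\ldots,2n+1\}$ and $Y_m:=\{m^2\}\times[0,\infty)\times\{1,\ldots,2m\}$. The map $\varphi$ is isometric within each $X_n$, and the inequality $(n+1)^2-(m+1)^2\leq 2(n^2-m^2)$ for $n\geq m\geq 1$ (coming from $(n+m+2)/(n+m)\leq 2$) shows that $\varphi$ is $2$-Lipschitz across slabs; a coarse inverse $\tilde\varphi$ sends $Y_1$ into $X_1$ by inclusion and each $Y_m$ with $m\geq 2$ into $X_{m-1}$ by collapsing one line. The map $\psi$ is the inclusion, hence trivially a coarse equivalence. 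The identities $\varphi\circ f=g\circ\varphi$ and $\psi\circ g=f\circ\psi$ follow directly from the formulas, so in particular the closeness relations in the hypothesis of \cref{conjecture} hold.

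For the main claim, assume for contradiction that $\Phi\colon X\to Y$ and $\Psi\colon Y\to X$ form a coarse conjugacy with common closeness bound $B$ and common control function $\rho$. Choose thresholds $m_0,n_0$ with $2m_0-1, 2n_0-1 > \max(\rho(1),B)$. Then for $m\geq m_0$ the slab $Y_m$ is its own $\rho(1)$-connected component of $Y$ and $\N_B(Y_m)\cap Y=Y_m$; similarly for $X_n$ with $n\geq n_0$. Since $X_n$ is $1$-connected via unit steps, $\Phi(X_n)$ lies in a single $\rho(1)$-connected component of $Y$, and $\Psi(Y_m)$ lies in a single $\rho(1)$-connected component of $X$.

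Fix $m\geq m_0$ and distinguish two cases. Case (i): $\Psi(Y_m)\subseteq X_{n'}$ for some $n'\geq n_0$. Then $\Phi\Psi(Y_m)\subseteq\N_B(Y_m)\cap Y=Y_m$, and the single component of $Y$ containing $\Phi(X_{n'})$ meets $Y_m$ and hence equals $Y_m$; so $\Phi(X_{n'})\subseteq Y_m$. Now $\Phi|_{X_{n'}}\colon X_{n'}\to Y_m$ and $\Psi|_{Y_m}\colon Y_m\to X_{n'}$ are mutually inverse coarse equivalences intertwining $f$ with $g$, so \cref{qwerty} applied with $F=2n'+1$, $G=2m$ gives $n'\leq m-1$, while its symmetric application with $F'=2m$, $G'=2n'+1$ gives $n'\geq m$, a contradiction. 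Case (ii): $\Psi(Y_m)\subseteq S_X:=\bigcup_{n<n_0}X_n$. Then $\Phi\Psi(Y_m)\subseteq\bigcup_{n<n_0}\Phi(X_n)$ lies in a union of at most $n_0-1$ $\rho(1)$-connected components of $Y$; since this is also a nonempty subset of the component $Y_m$, one of those $n_0-1$ components equals $Y_m$. Hence $Y_m$ can arise this way for at most $n_0-1$ distinct values of $m$, so case (ii) fails for all sufficiently large $m$, forcing case (i) and the desired contradiction. The main obstacle is handling the small-slab case (ii) cleanly, which is done by pigeonhole on the finite collection of small slabs of $X$ against the infinite family of large slabs of $Y$.
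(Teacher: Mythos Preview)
Your proof is correct and follows the same overall strategy as the paper: match slabs of $X$ and $Y$ via a connectivity argument, then apply \cref{qwerty} in both directions to derive incompatible inequalities on the slab indices. The execution, however, is noticeably more economical. The paper first proves a structural lemma (\cref{lemma-split}) establishing a cofinite bijection $F\colon \NN_X\to\NN_Y$ between slab indices together with two-sided coarse-density statements $\N_B(\Phi(H_n))=H'_{\sqrt{F(n^2)}}$ and $\N_B(\Psi(H'_m))=H_{\sqrt{G(m^2)}}$; this occupies roughly a page. It then applies \cref{qwerty} once in each direction (\cref{increasing-lemma,non-decreasing-lemma}) to obtain $F(n^2)\geq(n+1)^2$ and $G(m^2)\geq m^2$, which together contradict $G=F^{-1}$. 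You bypass the full bijection by observing directly that each large slab is its own $\rho(1)$-connected component, so $\Psi(Y_m)$ lands in a single slab $X_{n'}$; once $n'$ is large you immediately get the reverse containment $\Phi(X_{n'})\subseteq Y_m$ and run \cref{qwerty} both ways on that single pair. Your pigeonhole disposal of the small-$n'$ case replaces the cofiniteness argument at the end of the paper's \cref{lemma-split}. The paper's route has the advantage of isolating a reusable statement about how coarse equivalences respect such slab decompositions, while yours reaches the contradiction faster for this particular theorem.
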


The proof of \cref{disjointTheorem} will be divided into steps. First, let us verify that the hypothesis of \cref{conjecture} is satisfied.

\begin{lem}\label{hypothesis}
Let $f\text{, }g\text{, }\varphi\text{, and }\psi$  be as in \cref{ex:disjointTheorem}. Then both $\varphi$ and $\psi$ are coarse equivalences, and furthermore 
$\varphi\circ f$ is close to $g\circ\varphi$ and $\psi\circ g$ is close to $f\circ\psi$.
\end{lem}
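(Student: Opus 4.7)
The plan is to check each assertion directly from the definitions. I begin with the two closeness claims, which turn out to be exact equalities: a direct computation gives $\varphi\circ f(n^2,r,k) = ((n+1)^2, kr, k) = g\circ\varphi(n^2,r,k)$ (using that $g$ is the restriction of $f$ to $Y$, and noting $k\leq 2n+1\leq 2(n+1)$ so that the target indeed lies in $Y$), and symmetrically $\psi\circ g(n^2,r,k)=(n^2,kr,k)=f\circ\psi(n^2,r,k)$. Both diagrams thus strictly commute, so the required compositions are in fact $0$-close.

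For coarse equivalence of $\psi$, note that $\psi$ is the inclusion $Y\hookrightarrow X$ and so is trivially controlled. A coarse inverse $\tilde\psi\colon X\to Y$ is given by $\tilde\psi(n^2,r,k) = (n^2, r, \min(k, 2n))$; this differs from the identity on $X$ by at most $1$ in the last coordinate, so $\tilde\psi\circ\psi=\id_Y$ exactly while $\psi\circ\tilde\psi$ is $1$-close to $\id_X$.

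For coarse equivalence of $\varphi$, control requires a small calculation because the first-coordinate shift $n^2\mapsto(n+1)^2$ is unbounded. Using the max norm on $\RR^3$ (equivalent to the Euclidean norm up to constants), I would invoke the identity $(n+1)^2-(n'+1)^2 = (n^2-(n')^2)+2(n-n')$ together with the bound $|n-n'|\leq|n^2-(n')^2|/(n+n')\leq|n^2-(n')^2|/2$, valid since $n,n'\geq 1$, to conclude $d(\varphi(x),\varphi(x'))\leq 2\,d(x,x')$. A coarse inverse $\tilde\varphi\colon Y\to X$ can then be defined by $\tilde\varphi(m^2,r,k)=((m-1)^2,r,\min(k,2m-1))$ for $m\geq 2$ and $\tilde\varphi(1,r,k)=(1,r,k)$; an analogous estimate shows $\tilde\varphi$ is controlled, one checks $\tilde\varphi\circ\varphi=\id_X$ exactly, and $\varphi\circ\tilde\varphi$ is $3$-close to $\id_Y$, with the $3$ arising from the boundary case $m=1$, where $\varphi\circ\tilde\varphi(1,r,k)=(4,r,k)$.

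There is no substantive obstacle: the proof is a routine verification once the candidate coarse inverses are at hand. The only care needed is at the boundary layers $k=2m-1$, $k=2m$, $k=2n+1$, and $m=1$, where the natural inverses have to be mildly adjusted so as to land in the correct subset of $\RR^3$.
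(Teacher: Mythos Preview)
Your proof is correct and follows essentially the same approach as the paper: the coarse inverses you construct, $\tilde\psi(n^2,r,k)=(n^2,r,\min(k,2n))$ and $\tilde\varphi(m^2,r,k)=(\max(1,m-1)^2,r,\min(k,2m-1))$, coincide with the maps the paper calls $\Phi$ and $\Psi$, and both arguments reduce to the observation that $\varphi\circ f=g\circ\varphi$ and $\psi\circ g=f\circ\psi$ hold exactly. The only difference is that you supply an explicit Lipschitz bound for $\varphi$ via the identity $(n+1)^2-(n'+1)^2=(n^2-(n')^2)+2(n-n')$, whereas the paper simply declares $\varphi$ and its inverse ``clearly controlled''.
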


\begin{proof}
Define $\Psi \colon Y \to X$ by
\[\Psi(n^2, r, k) = \left(\max(1, n-1)^2,\; r,\; \min\left(k,\,2\max(1, n-1)+1\right)\right).\]
Then, $\Psi\circ \varphi$ equals the identity on $X$ and
\[\varphi \circ \Psi(n^2, r, k) = \left(\max(n, 2)^2,\; r,\; \min\left(k,\, 2\max(1, n-1)+1\right)\right),\]
so $\varphi \circ \Psi$ is close to the identity on $Y$. Since both $\varphi$ and $\Psi$ are clearly controlled maps, this shows that $\varphi$ is a coarse equivalence.

Define $\Phi\colon X\to Y$ by
\[\Phi\left(n^2, r, k\right) = \left(n^2, r, \min(k, 2n)\right).\]
Then, $\Phi\circ\psi$ equals the identity on $Y$, and $\psi\circ\Phi$ is given by the same formula as~$\Phi$, and in particular it is close to the identity on $X$. Since both $\psi$ and $\Phi$ are clearly controlled maps, this shows that $\psi$ is a coarse equivalence.

It is immediate that $\varphi\circ f = g\circ \varphi$, in particular these compositions are close, and similarly $\psi \circ g = f\circ \psi$.
\end{proof}

To obtain \cref{disjointTheorem}, it remains to show that $f$ and $g$ are not coarsely conjugate. The crucial ingredients are \cref{qwerty} and the following technical result.

\begin{lem}\label{lemma-split} Let $X$ and $Y$ be as in \cref{ex:disjointTheorem}. Assume that $\Phi\colon X\to Y$ is a coarse equivalence, and $\Psi$ is its coarse inverse.
There exist cofinite subsets $\NN_X, \NN_Y$ of $\{n^2 \st n\in \NN_{\geq 1}\}$ such that $\Phi$ and $\Psi$ restrict to mutually coarsely inverse coarse equivalences between the sets $(\NN_X \times \RR^2) \cap X$ and $(\NN_Y \times \RR^2) \cap Y$, and moreover there is $B\geq 0$ and a bijection $F\colon \NN_X\to \NN_Y$ such that for every $n^2 \in \NN_X$
\begin{align*}
\N_B\left(\Phi\left(\{n^2\}\times [0,\infty)\times [2n+1]\right)\right) &= \{F(n^2)\} \times [0,\infty)\times\left[2\sqrt{F(n^2)}\right] \\
\N_B\left(\Psi\left(\{F(n^2)\}\times [0,\infty)\times \left[2\sqrt{F(n^2)}\right]\right)\right) &= \{n^2\} \times [0,\infty)\times [2n+1],
\end{align*}
where $[m]$ denotes the set $\set{\range{1}{m}}$.
\end{lem}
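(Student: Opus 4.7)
The plan is to exploit the fact that the sheets $S_n^X \defeq \{n^2\}\times[0,\infty)\times[2n+1]$ and $S_m^Y \defeq \{m^2\}\times[0,\infty)\times[2m]$ are the connected components of $X$ and $Y$, and that consecutive sheets are separated by the growing distance $(n+1)^2-n^2 = 2n+1$. This forces any coarse equivalence to induce a bijection on sheets at infinity, which will be the map $F$.

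Fix moduli $\rho,\rho'$ for $\Phi,\Psi$ and $D\geq 0$ such that $\Psi\circ\Phi$ and $\Phi\circ\Psi$ are $D$-close to the respective identities. Call a subset of a metric space \emph{$C$-chain-connected} if any two of its points are joined by a chain with consecutive steps at most $C$. Since consecutive sheets of $Y$ are $2m+1$ apart, the $C$-chain-components of $Y$ consist of a single ``low'' union $L_Y^C \defeq \bigcup_{m\leq M_0} S_m^Y$ with $M_0\defeq\lfloor(C-1)/2\rfloor+1$, together with each sheet $S_m^Y$ for $m>M_0$; analogously for $X$. Each $S_n^X$ is $1$-chain-connected, so $\Phi(S_n^X)$ lies in a single $\rho(1)$-chain-component of $Y$: either in $L_Y^{\rho(1)}$, or in a unique sheet of $Y$ whose first coordinate I shall denote $F(n^2)$. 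The ``low'' alternative can occur for only finitely many $n$: applying $\Psi$ (which turns the $\rho(1)$-chain-connectedness of $L_Y^{\rho(1)}$ into $\rho'(\rho(1))$-chain-connectedness) shows that $\Psi(L_Y^{\rho(1)})$ is contained in some $\bigcup_{n\leq N^{*}}S_n^X$, so the closeness $\Psi\circ\Phi\sim\id_X$ forces $S_n^X\subseteq\N_D\bigl(\bigcup_{n\leq N^{*}}S_n^X\bigr)$, bounding $n$.

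Once $F$ is defined on a cofinite set of square first coordinates, the symmetric argument applied to $\Psi$ yields a partial inverse $G$; for $n^2$ in both domains with $F(n^2)$ in the domain of $G$, closeness of $\Psi\circ\Phi$ to $\id_X$ forces $S_n^X$ to intersect the sheet with first coordinate $G(F(n^2))$, so $G\circ F=\id$, and dually $F\circ G=\id$. After excluding finitely many more indices, one obtains cofinite $\NN_X,\NN_Y\subseteq\{n^2 \st n\in\NN_{\geq 1}\}$ and a bijection $F\colon\NN_X\to\NN_Y$; that $\Phi,\Psi$ restrict to mutually coarsely inverse coarse equivalences between $(\NN_X\times\RR^2)\cap X$ and $(\NN_Y\times\RR^2)\cap Y$ is then immediate. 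For the neighborhood equality with $F(n^2)=m^2$: we have $\Phi(S_n^X)\subseteq S_m^Y$ by construction, and for any $y\in S_m^Y$ the point $\Psi(y)$ lies in $S_n^X$ (since $G(m^2)=n^2$), so $y$ is within $D$ of $\Phi(\Psi(y))\in\Phi(S_n^X)$; this gives $S_m^Y\subseteq\N_D(\Phi(S_n^X))$, while the reverse inclusion $\N_D(\Phi(S_n^X))\subseteq S_m^Y$ holds after excluding the finitely many $n^2\in\NN_X$ for which $m$ is small enough that another sheet of $Y$ lies within $D$ of $S_m^Y$. Setting $B\defeq D$ concludes, the statement for $\Psi$ being symmetric.

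The main obstacle is the finiteness claim in the second paragraph: that only finitely many sheets of $X$ can be mapped into the low-index region $L_Y^{\rho(1)}$. It is exactly here that one must marry the chain-component decomposition with the controlled coarse inverse $\Psi$; without $\Psi$, nothing prevents a single coarse embedding from stacking infinitely many sheets of $X$ into a bounded number of sheets of $Y$.
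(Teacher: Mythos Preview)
Your proof is correct and rests on the same core observation as the paper: each sheet is $1$-chain-connected, consecutive sheets are separated by the diverging gap $2n+1$, and a controlled map sends $1$-chains to $\rho(1)$-chains, so eventually sheets must go to sheets. The organisation, however, is genuinely different. The paper proceeds asymmetrically: it anchors at the first sheet, proves $\Phi(H_1)\subseteq H_1'\cup\cdots\cup H_M'$ and the reverse containment $\N_C(\Phi(H_1))\supseteq H_1'$ via an explicit unboundedness argument, uses this to push tails of $X$ into tails of $Y$, and then shows that for each large $m$ there is a \emph{unique} $n$ with $H_m'\subseteq \N_B(\Phi(H_n))$ through an open-cover-and-connectedness argument on the halflines $L_k$---thereby constructing $G$ first and only then $F=G^{-1}$. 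You instead argue symmetrically: you decompose both $X$ and $Y$ into their $C$-chain-components at the outset, obtain $F$ and $G$ in parallel, and get bijectivity from $G\circ F=\id$ and $F\circ G=\id$ via the $D$-closeness of $\Psi\circ\Phi$ and $\Phi\circ\Psi$ to the identities. This avoids both the unboundedness step for $\N_C(\Phi(H_1))\supseteq H_1'$ and the open-cover uniqueness argument, at the cost of slightly more bookkeeping on cofinite domains. One point you gloss over: for $G\circ F$ to make sense you need $F(n^2)$ to land in the domain of $G$ for cofinitely many $n$, equivalently $F(n^2)\to\infty$; this holds because each fibre $F^{-1}(m^2)$ is finite by the very finiteness argument you flag as the main obstacle (applied now to a single high sheet $S_m^Y$ rather than to $L_Y^{\rho(1)}$). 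With that made explicit, your ``after excluding finitely many more indices'' is routine.
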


That is, for $n\in\NN_{\geq 1}$ sufficiently large, $\Phi$ and $\Psi$ preserve (interchange) the ``thick halflines'' $\{n^2\}\times [0,\infty) \times [2n+1]$ and $\{F(n^2)\} \times [0,\infty) \times [2\sqrt{F(n^2)}]$.
Let us denote $H_n \defeq \{n^2\}\times [0,\infty) \times [2n+1] \subseteq X$ and $H'_m \defeq \{m^2\}\times [0,\infty) \times [2m] \subseteq Y$.

\begin{proof}[Proof of \cref{lemma-split}]
Since modifying the value of $\Phi$
for a single argument does not affect our assumptions,
we can assume that $\Phi(1,0,1)=(1,0,1)$.
For definiteness, let us fix the maximum metric on $X\text{, }Y\subseteq \RR^3$.
 Let $\rho\colon [0,\infty) \to [0,\infty)$ be a non-decreasing function such that $d(\Phi(x), \Phi(x'))\leq \rho \circ d(x,x')$.

Let us observe that there is $M\in \NN_{\geq 1}$ such that 
\begin{equation}\label{first-halfline-preserved}
\Phi(H_1)  \subseteq H'_1\cup H'_2\cup \ldots \cup H'_M.
\end{equation}
Indeed, first note that for every point $x\in H_1$ there is a sequence $x=x_0, x_1, \ldots, x_n=(1,0,1)$ in $X$ such that $d(x_i, x_{i+1})\leq 1$.
For example, if we denote $(1,r,n)\defeq x$, we can take the sequence $(1,r,n), (1,r-1,n),\ldots, (1,r-\floor{r},n), (1,0,n), \ldots, (1,0,1)$.
Consider the
sequence $(y_i)_{i=0}^n$ given by $y_i \defeq \Phi(x_i)$. It satisfies $d(y_i, y_{i+1})\leq \rho(1)$, $y_0=\Phi(x)$ and $y_n=(1,0,1)$. It follows that $\Phi(x) \in H'_1\cup \ldots \cup H'_M$,
where $M$ is the smallest positive integer such that $2M+1 > \rho(1)$ because for $m>M$ the distance between $H'_m$ and its complement is greater than $\rho(1)$.

Similarly, we obtain that if $\Phi(n^2,r,k) = (m^2, s, l)$, then for $m\leq M$ we have 
\begin{equation}\label{containment}
\Phi(H_n) \subseteq H'_1\cup H'_2\cup \ldots \cup H'_M,
\end{equation}
and for $m>M$ we have
\begin{equation}\label{better-containment}
\Phi(H_n) \subseteq H'_m.
\end{equation}

\smallskip We will now obtain a certain inclusion opposite to \eqref{first-halfline-preserved}, namely that there is $C > 0$ such that 
\begin{equation}\label{neighbourhood}
\N_{C}\left(\Phi(H_1)\right) \supseteq H'_1.
\end{equation}
Since $\Phi$ is a coarse equivalence, there is a function $S\colon [0,\infty)\to [0,\infty)$ such that for every $R > 0$
\[d(x,x') \geq S(R) \text{ implies } d(\Phi(x), \Phi(x')) \geq R.\]
(For instance, if $\Psi\circ\Phi$ is $B$-close to the identity on~$X$ and $d(\Psi(y), \Psi(y'))\leq \eta\circ d(y,y')$ for a non-decreasing function $\eta\colon [0,\infty)\to [0,\infty)$, we get that $d(\Phi(x), \Phi(x')) < R$ implies $d(\Psi\circ\Phi(x), \Psi\circ\Phi(x')) \leq \eta(R)$, and in turn $d(x,x')\leq \eta(R) + 2B$, so we can take any $S(R) >\eta(R) + 2B$.)
In particular, $d((1,0,1),\,x)\to \infty$ implies $d(\Phi(1,0,1), \Phi(x)) \to \infty$, and hence the set
\[A = \left\{s \in [0,\infty) \st[\big] \exists x\in H_1 \text{ s.t.\ } \Phi(x)=(m^2,s,l) \right\}\]
is unbounded (as the projection of $\Phi(H_1)$ onto the first coordinate is bounded by inclusion \eqref{first-halfline-preserved}, and consequently the projection onto the third coordinate is bounded as well). Since we have observed that any two points $y,y' \in \Phi(H_1)$ can be ``connected'' with a sequence $y=y_0, y_1, \ldots, y_n=y'$ with $d(y_i, y_{i+1})\leq \rho(1)$, intervals in the complement of~$A$ have length at most~$\rho(1)$, and hence $\N_{\rho(1)}(A) = [0,\infty)$.
We~conclude \eqref{neighbourhood} for $C= \max(\rho(1), M^2-1, 2M-1)$.\smallskip

Observe that for every $m \in \NN_{\geq 1}$, the diameter of $[2m]$ equal to $2m-1$ is bounded by~$m^2$. Consequently, we get 
\[\N_{m^2}(H'_1) \supseteq H'_1\cup \ldots \cup H'_m = \left(\left[1,(m+1)^2\right)\times [0,\infty)\times \NN_{\geq 1}\right) \cap Y.\]
Hence, from inclusion \eqref{neighbourhood} we deduce that for every $m\in \NN_{\geq 1}$ if one takes
$R_m > C+m^2$ and $t_m\defeq S(R_m) + 1$, one gets
\begin{equation}\label{tails}
\Phi\big([t_m,\infty)\times [0,\infty) \times \NN_{\geq 1}\, \cap\, X\big) \subseteq \left[(m+1)^2, \infty\right)\times [0,\infty) \times \NN_{\geq 1}
\end{equation}
because the left-hand side is at least $R_m$ apart from $\Phi(H_1)$, and hence it is disjoint from $\N_{C+m^2}(\Phi(H_1)) \supseteq \N_{m^2}(H'_1)$.

Since $\Phi$ and $\Psi$ are coarse inverses of each other, there exists $B$ such that $\Psi\circ \Phi$ and $\Phi\circ \Psi$ are $B$-close to the respective identity maps on $X$ and $Y$, and in particular $\N_B(\Psi(Y)) = X$ and $\N_B(\Phi(X)) = Y$. Hence, if $m\in \NN_{\geq 1}$ is so large that $2m-1 > B$, then there exists $n\in \NN_{\geq 1}$ such that
\begin{equation}\label{intersection}
\Phi(H_n)\, \cap\, H'_m \neq \emptyset,
\end{equation}
and if also $m>M$, then the non-emptiness \eqref{intersection} of the intersection
implies
the inclusion \eqref{better-containment}. Such $n$ need not be unique, but it follows from formulae~\eqref{containment} and~\eqref{better-containment} that by increasing $m$ we can require all such $n$ to be arbitrarily large.

Now, let $m_0\in \NN_{\geq 1}$ be such that $m_0 > M$, $2m_0 - 1 > B+1$, and for every $m\geq m_0$ every $n$ satisfying \eqref{intersection} also satisfies
\begin{equation}\label{conditions-on-n}
2n-1 > \max\big( S(2B+4),\, \eta(1),\, B  \big).
\end{equation}

Let $m\geq m_0$. Since 
$H'_m$ is contained in $\N_B(\Phi(X)) = Y$ and $2m-1>B$, the set $H'_m$ 
is contained already in the union of the sets $\N_B(\Phi(H_n))$ over $n$ satisfying formula~\eqref{intersection}. In~particular, the interiors $Z_n$ of the sets $\N_{B+1}(\Phi(H_n))$ over such~$n$ form an open covering of $H'_m$.
Since $2n-1 \geq S(2B+4)$ for such $n$,
the sets~$Z_n$ are disjoint. Hence, by the connectedness of a halfline, every $Z_n$ consists of a number of halflines $L_k\defeq \{m^2\} \times [0,\infty) \times \{k\}$. However, if there is more than one $n$ satisfying~\eqref{intersection}, then there must be $k\in [2m-1]$ such that $L_k\subseteq Z_n$ and $L_{k+1}\subseteq Z_{n'}$ for $n\neq n'$. But then, the distance between $Z_n$ and $Z_{n'}$ is $1$, and hence the distance between $\Phi(H_n)$ and $\Phi(H_{n'})$ is at most $2B+3$, contradicting the inequality $2n-1 \geq S(2B+4)$ from~\eqref{conditions-on-n}.

That is, we have just shown that
for every integer $m\geq m_0$ there exists a \emph{unique} $n\in \NN_{\geq 1}$ such that $\N_B(\Phi(H_n)) \supseteq H'_m$, thus we get the equality
\[\N_B(\Phi(H_n)) = H'_m\]
because $m>M$ and hence \eqref{better-containment} holds.
Denote $\NN_Y \defeq \{m^2 \st m\in \NN_{\geq 1},\ m \geq m_0\}$, define a~function~$G$ as the association $m^2\mapsto n^2$, and put $\NN_X = G(\NN_Y)$. The function~$F$ from the statement is then just the inverse of $G$, and the above displayed equality gives the first displayed equality in the conclusion of \cref{lemma-split}.

Since the composition $\Psi \circ \Phi$ is $B$-close to the identity on $X$, we obtain
\begin{equation}\label{Psi-inclusion}
\begin{aligned}
H_{{G(m^2)^{1/2}}} &\subseteq \N_B(\Psi\circ\Phi(H_{G(m^2)^{1/2}}))\\
&\subseteq \N_B(\Psi(H'_m)).
\end{aligned}
\end{equation}
Inequality \eqref{conditions-on-n} guarantees a number of conditions on all $n^2$ in the image of $G$, namely $n^2$ of the form $G(m^2)$ for some $m^2\in \NN_Y$. In particular, $2n-1 > B$, and hence inclusion \eqref{Psi-inclusion} implies that 
\[H_{G(m^2)^{1/2}} \cap \Psi(H'_m) \neq \emptyset,\]
and by the condition $2n-1 > \eta(1)$ the above non-emptiness of the intersection
implies
the inclusion:
$H_{G(m^2)^{1/2}} \supseteq \Psi(H'_{m})$.
Then, using the inequality $2n-1 > B$ again,
we obtain
\begin{align*}
H_{G(m^2)^{1/2}}   &= \N_B(H_{G(m^2)^{1/2}} ) \\
&\supseteq \N_B(\Psi(H'_{m})),
\end{align*}
which together with inclusion \eqref{Psi-inclusion} yields the equality 
\[H_{G(m^2)^{1/2}} = \N_B(\Psi(H'_{m})), \]
which is the second displayed equality in the conclusion of \cref{lemma-split}.

It remains to check that the set $\NN_X$ is cofinite in $\{n^2 \st n\in \NN_{\geq 1}\}$. Note that its complement consists of squares $n^2$ such that $\Phi(H_n) \subseteq [1,m_0^2)\times [0, \infty) \times \NN_{\geq 1}$,
and by formula \eqref{tails} these belong to the bounded interval $[1,t_{m_0})$.
\end{proof}

\begin{lem}\label{increasing-lemma}
Let $f$ and $g$ be as in \cref{ex:disjointTheorem} and $\Phi$ and $\Psi$ be as in \cref{lemma-split}. Assume that $\Phi\circ f$ and $g\circ\Phi$ are close. For the map $F\colon \NN_X\to \NN_Y$ given by \cref{lemma-split}, we have $F(n^2) \geq (n+1)^2$ for all $n^2\in \NN_X$.
\end{lem}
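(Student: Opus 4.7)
The plan is to argue by contradiction: suppose there exists $n^2 \in \NN_X$ with $F(n^2) = m^2$ for some $m \leq n$. The key structural observation is that $f$ preserves each thick halfline $H_n$ and $g$ preserves each $H'_m$, since both act by $(\cdot, r, k) \mapsto (\cdot, kr, k)$ and leave the first and third coordinates unchanged. I therefore want to restrict the entire setup to $H_n$ and $H'_m$ and then invoke Proposition \ref{qwerty}. The crucial discrepancy is that on $H_n$ the map $f$ scales the second coordinate by as much as $2n+1$ (attained on the topmost halfline $k = 2n+1$), whereas on $H'_m$ the map $g$ scales by at most $2m \leq 2n$, producing exactly the exponential mismatch that Proposition \ref{qwerty} is designed to exploit.

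First I would verify that $\Phi$ and $\Psi$ restrict to mutually coarsely inverse coarse equivalences between $H_n$ and $H'_m$. Since any set is contained in its own $B$-neighbourhood, the equalities $\N_B(\Phi(H_n)) = H'_m$ and $\N_B(\Psi(H'_m)) = H_n$ from Lemma \ref{lemma-split} immediately give the inclusions $\Phi(H_n) \subseteq H'_m$ and $\Psi(H'_m) \subseteq H_n$. Combined with controlledness of $\Phi$ and $\Psi$ and the $B$-closeness of $\Psi \circ \Phi$ and $\Phi \circ \Psi$ to the respective identities on $X$ and $Y$, this exhibits the desired restricted coarse equivalences. Since moreover $f(H_n) \subseteq H_n$ and $g(H'_m) \subseteq H'_m$, the assumed closeness of $\Phi \circ f$ and $g \circ \Phi$ on all of $X$ restricts to closeness of $\Phi|_{H_n} \circ f|_{H_n}$ and $g|_{H'_m} \circ \Phi|_{H_n}$ as maps $H_n \to H'_m$.

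Having set up the restricted picture, I apply Proposition \ref{qwerty} after identifying $H_n$ with $[0,\infty) \times \{1,\ldots,2n+1\}$ and $H'_m$ with $[0,\infty) \times \{1,\ldots,2m\}$. Take $x_0 \defeq (n^2, 1, 2n+1) \in H_n$; then $f^j(x_0) = (n^2, (2n+1)^j, 2n+1)$, so with $F \defeq 2n+1$ the second-coordinate growth is exactly $F^j$. On the other hand, every $(m^2, s, k) \in H'_m$ satisfies $g(m^2, s, k) = (m^2, ks, k)$ with $k \leq 2m \leq 2n$, so setting $G \defeq 2n$ we obtain $G < F$. Proposition \ref{qwerty} then rules out the existence of a coarse equivalence $H_n \to H'_m$ intertwining $f|_{H_n}$ and $g|_{H'_m}$ up to closeness, contradicting the previous paragraph and forcing $F(n^2) \geq (n+1)^2$.

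I do not foresee a serious obstacle: Lemma \ref{lemma-split} has already done the structural heavy lifting of matching thick halflines, and the strict inequality $2n+1 > 2n$ that feeds Proposition \ref{qwerty} is precisely the reason for including $2n+1$ (rather than $2n$) halflines in the definition of $H_n$ in Example \ref{ex:disjointTheorem}.
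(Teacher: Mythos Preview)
Your proof is correct and follows essentially the same approach as the paper: assume for contradiction that $F(n^2)=m^2$ with $m\leq n$, use Lemma~\ref{lemma-split} to restrict $\Phi,\Psi$ to mutually inverse coarse equivalences between $H_n$ and $H'_m$, and then apply Proposition~\ref{qwerty} with base point $x_0=(n^2,1,2n+1)$ and exponential rates $2n+1$ versus~$2m$ (you use $2n\geq 2m$, which is equally valid). Your explicit verification that $\Phi(H_n)\subseteq H'_m$ and $\Psi(H'_m)\subseteq H_n$ follow from the $\N_B$-equalities is exactly what the paper's phrase ``$\Phi$ and $\Psi$ restrict to mutually coarsely inverse coarse equivalences'' compresses.
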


\begin{proof}
Suppose for a contradiction that $F(n^2) \leq n^2$ for some $n^2\in \NN_X$, and denote $m^2=F(n^2)$. By \cref{lemma-split}, $\Phi$ and $\Psi$ restrict to mutually coarsely inverse coarse equivalences between $H_n = \{n^2\}\times [0,\infty) \times [2n+1]$ and $H'_m = \{m^2\}\times [0,\infty)\times [2m]$. By ignoring the first coordinate, the set $H_n$ is isometric to $[0,\infty)\times \set{\range{1}{2n+1}}$ and $H'_m$ to $[0,\infty)\times \set{\range{1}{2m}}$, so we can apply \cref{qwerty} with $F\defeq 2n+1 \geq 2m+1 > 2m \eqdef G$ and $x_0=(n^2,1,2n+1)\in H_n$ to obtain a contradiction.
\end{proof}

\begin{lem}\label{non-decreasing-lemma}
Let $f$ and $g$ be as in \cref{ex:disjointTheorem} and $\Phi$ and $\Psi$ be as in \cref{lemma-split}. Assume that $\Psi\circ g$ and $f\circ\Psi$ are close. For the map $G\colon \NN_Y\to \NN_X$ being the inverse of the map $F$ given by \cref{lemma-split}, we have $G(m^2) \geq m^2$ for every $m^2\in \NN_Y$.
\end{lem}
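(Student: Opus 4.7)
My plan is to mirror the argument of Lemma \ref{increasing-lemma} with the roles of the two halflines swapped, so that this time it is the dynamics on $H'_m$ that has the larger expansion factor. Suppose for contradiction that $G(m^2) < m^2$ for some $m^2 \in \NN_Y$, and set $n^2 \defeq G(m^2)$; then $n \leq m-1$, so $2n+1 \leq 2m-1 < 2m$.

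By Lemma \ref{lemma-split}, the second displayed equation forces $\Psi(H'_m) \subseteq H_n$, and the restriction $\Psi|_{H'_m}\colon H'_m \to H_n$ is a coarse equivalence whose coarse inverse is given by $\Phi|_{H_n}$ (the mutual closeness to identities transfers from the global compositions to these restrictions because, by the displayed equations, the images stay in the respective single halflines). Ignoring the constant first coordinate, $H'_m$ is isometric to $[0,\infty) \times \{1,\ldots,2m\}$ and $H_n$ to $[0,\infty) \times \{1,\ldots,2n+1\}$, so we are in the geometric setting required by Proposition \ref{qwerty}.

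I will then apply \ref{qwerty} with the \emph{reversed} assignment of roles compared to Lemma \ref{increasing-lemma}: take the ``$f$'' of \ref{qwerty} to be $g|_{H'_m}$, the ``$g$'' of \ref{qwerty} to be $f|_{H_n}$, and the ``$\Phi$'' of \ref{qwerty} to be $\Psi|_{H'_m}$. The base point $x_0 \defeq (m^2, 1, 2m) \in H'_m$ has forward orbit $g^k(x_0) = (m^2, (2m)^k, 2m)$, giving radial growth exactly $(2m)^k$, so one sets $F \defeq 2m$. The identity $f(n^2, r, k) = (n^2, kr, k)$ for $(n^2, r, k) \in H_n$ with $k \leq 2n+1$ gives the uniform expansion bound $G \defeq 2n+1 < F$. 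The hypothesis $\Psi \circ g \sim f \circ \Psi$ (where $\sim$ denotes closeness) restricts to the closeness required by \ref{qwerty}, yielding the desired contradiction.

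Nothing in this argument is conceptually new: the only substantive observation is that the asymmetry ``$2n+1$ vs.\ $2m$'' switches direction when $n < m$ rather than $m \leq n$, so Proposition \ref{qwerty} can be applied in the opposite direction to the one used in Lemma \ref{increasing-lemma}. This numerical asymmetry also explains why the conclusion here is the weaker $G(m^2) \geq m^2$, as opposed to the stronger $F(n^2) \geq (n+1)^2$ in Lemma \ref{increasing-lemma}: for $F$ we needed $2m < 2n+1$, which already follows from $m \leq n$, whereas for $G$ we need $2n+1 < 2m$, which only follows from $n \leq m-1$.
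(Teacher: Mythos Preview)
Your proof is correct and follows exactly the approach intended by the paper, whose own proof reads in full ``The argument is the same as for \cref{increasing-lemma}.'' You have faithfully unpacked what that sentence means: swap the roles of $H_n$ and $H'_m$, use $\Psi$ in place of $\Phi$, take $x_0=(m^2,1,2m)$ so that the orbit under $g$ grows like $(2m)^k$, bound the expansion of $f$ on $H_n$ by $2n+1$, and invoke \cref{qwerty} with $2n+1<2m$. Your closing remark explaining why only the weak inequality $G(m^2)\geq m^2$ is obtained (versus the strict $F(n^2)\geq (n+1)^2$ in \cref{increasing-lemma}) is a nice addition that the paper leaves implicit.
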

\begin{proof} The argument is the same as for \cref{increasing-lemma}.
\end{proof}

\begin{proof}[Proof of \cref{disjointTheorem}]
The facts that $\varphi$ and $\psi$ are coarse equivalences, the maps $\varphi\circ f$ and $g\circ\varphi$ are close (in fact equal), and similarly for the maps $\psi\circ g$ and $f\circ\psi$ were proved in \cref{hypothesis}, so it remains to show that $f$ and $g$ are not coarsely conjugate.

Suppose for a contradiction that there exists a coarse equivalence $\Phi\colon X\to Y$ with a coarse inverse $\Psi$ such that $\Phi\circ f$ is close to $g\circ\Phi$ and $\Psi\circ g$ is close to $f\circ\Psi$. Then, combining \cref{increasing-lemma} with \cref{non-decreasing-lemma} gives $G\circ F(n^2) \geq (n+1)^2$, which contradicts the fact that $G$ and $F$ are each other's inverses.
\end{proof}

\section*{Acknowledgements}
I thank William Geller, Dawid Kielak, and Michał Misiurewicz for helpful remarks on earlier versions of the manuscript.

\begin{bibsection}
\begin{biblist}
\bib{FNvL}{article}{
   author = {Fisher, David},
   author = {Nguyen, Thang},
   author = {van Limbeek, Wouter},
   title = {Rigidity of warped cones and coarse geometry of expanders},
   journal = {Adv. Math.},
   volume = {346},
   pages = {665--718},
   year = {2019},
   doi = {10.1016/j.aim.2019.02.015},
}

\bib{GM}{article}{
   author={Geller, William},
   author={Misiurewicz, Micha\l},
   title={Coarse entropy},
   journal={Fund. Math.},
   volume={255},
   date={2021},
   number={1},
   pages={91--109},
   issn={0016-2736},
   doi={10.4064/fm932-12-2020},
}

\bib{NY}{book}{
   author={Nowak, Piotr W.},
   author={Yu, Guoliang},
   title={Large scale geometry},
   series={EMS Textbooks in Mathematics},
   publisher={European Mathematical Society (EMS), Z\"{u}rich},
   date={2012},
   pages={xiv+189},
   isbn={978-3-03719-112-5},
   doi={10.4171/112},
}

\bib{Roe}{book}{
   author={Roe, John},
   title={Lectures on coarse geometry},
   series={University Lecture Series},
   volume={31},
   publisher={American Mathematical Society, Providence, RI},
   date={2003},
   pages={viii+175},
   isbn={0-8218-3332-4},
   doi={10.1090/ulect/031},
}

\bib{Roe-cones}{article}{
   author={Roe, John},
   title={Warped cones and property A},
   journal={Geom. Topol.},
   volume={9},
   date={2005},
   pages={163--178},
   doi={10.2140/gt.2005.9.163},
}

\bib{Yu}{article}{
   author={Yu, Guoliang},
   title={The coarse Baum-Connes conjecture for spaces which admit a uniform
   embedding into Hilbert space},
   journal={Invent. Math.},
   volume={139},
   date={2000},
   number={1},
   pages={201--240},
   issn={0020-9910},
   doi={10.1007/s002229900032},
}

\bib{Zava}{article}{
   author={Zava, Nicol\`o},
   title={On a notion of entropy in coarse geometry},
   journal={Topol. Algebra Appl.},
   volume={7},
   date={2019},
   number={1},
   pages={48--68},
   doi={10.1515/taa-2019-0005},
}

\end{biblist}
\end{bibsection}

\end{document}